\title[Covers and Envelopes of Modules]{Existence of covers and envelopes of a left orthogonal class and its right orthogonal class of modules}
\author[Umamaheswaran, Udhayakumar, Selvaraj]{Umamaheswaran Arunachalam, Udhayakumar Ramalingam, Selvaraj Chelliah}
\address{Department of Mathematics, Harish-Chandra Research Institute (HRI), Allahabad}
\email{ruthreswaran@gmail.com}
\address{Department of Mathematics, 
Bannari Amman Institute of Technology (BIT),
Sathyamangalam, Erode - 638401, Tamil Nadu, India
}
\email{udhayaram.v@gmail.com}
\address{Department of Mathematics, Periyar University, Salem-636011, 
Tamil Nadu, India}
\email{selvavlr@yahoo.com}
\subjclass[2010]{16D05, 16D10, 16D50, 16E30}
\keywords{$\mathcal{X}$-injective module, $\mathcal{X}^\bot$-projective module, Kaplansky class, $\mathcal{X}^\bot$-hereditary ring}
\newtheoremstyle{theorem}
  {10pt}		  
  {10pt}  
  {\sl}  
  {\parindent}     
  {\bf}  
  {. }    
  { }    
  {}     
\theoremstyle{theorem}
\newtheorem{theorem}{Theorem}
\newtheorem{corollary}[theorem]{Corollary}
\newtheorem{proposition}[theorem]{Proposition}
\newtheorem{remark}[theorem]{Remark}
\newtheorem{example}[theorem]{Example}
\DeclareMathOperator{\ima}{im}
\newtheoremstyle{defi}
  {10pt}		  
  {10pt}  
  {\rm}  
  {\parindent}     
  {\bf}  
  {. }    
  { }    
  {}     
\theoremstyle{defi}
\newtheorem{definition}[theorem]{Definition}
\newtheoremstyle{defi}
{10pt} 
{10pt} 
{\rm} 
{\parindent} 
{\bf} 
{. } 
{ } 
{} 
\theoremstyle{defi}
\begin{document}

\maketitle

\begin{abstract}
In this paper, we investigate the notions of $\mathcal{X}^\bot$-projective, $\mathcal{X}$-injective and $\mathcal{X}$-flat modules and give some characterizations of these modules, where $\mathcal{X}$ is a class of left $R$-modules. We prove that the class of all $\mathcal{X}^\bot$-projective modules is Kaplansky. Further, if the class of all $\mathcal{X}$-projective $R$-modules is closed under direct limits, we show the existence of $\mathcal{X}^\bot$-projective covers  and $\mathcal{X}$-injective envelopes over a $\mathcal{X}^\bot$-hereditary ring $R.$ Moreover, we decompose a $\mathcal{X}^\bot$-projective module into a projective and a coreduced $\mathcal{X}^\bot$-projective module over a self $\mathcal{X}$-injective and $\mathcal{X}^\bot$-hereditary ring. Finally, we prove that every module has a $\mathcal{W}$-injective precover over a coherent ring $R,$ where $\mathcal{W}$ is the class of all pure projective modules.
\end{abstract}

\section{Introduction}
 Throughout this paper, $R$ denotes an associative ring with identity and all $R$-modules, if not specified otherwise, are left $R$-modules. $R$-$Mod$ denotes the category of left $R$-modules.

	The notion of $FP$-injective modules over arbitrary rings was first introduced by Stenstr\"om in \cite{stns}. An $R$-module $M$ is called $FP$-injective if $Ext^{1}_{R}(N,M) = 0$ for all finitely presented $R$-modules $N$. Let $\mathcal{X}$ be a class of left $R$-modules. Mao and Ding in \cite{mao2} introduced the concept of $\mathcal{X}$-injective modules (see Definition \ref{2.0}).
 
	The notions of (pre)covers and (pre)envelopes of modules were introduced by Enochs in \cite{Eno1} and, independently, by Auslander and Smal\o\, in \cite{aus}. Since then the existence and the properties of (pre)covers and (pre)envelopes relative to certain submodule categories have been studied widely. The theory of (pre)covers and (pre)envelopes, which play an important role in homological algebra and representation theory of algebras, is now one of the main research topics in relative homological algebra.

	Salce introduced the notion of a cotorsion theory in \cite{salce}. Enochs showed the important fact that closed and complete cotorsion pairs provide minimal versions of covers and envelopes. Eklof and Trlifaj \cite{eklof} proved that a cotorsion pair $(\mathcal{A}, \mathcal{B})$ is complete when it is cogenerated by a set. Consequently, many classical cotorsion pairs are complete. In this way, Enochs et al. \cite{bashir} showed that every module has a flat cover over an arbitrary ring. These results motivate us to prove the existence of $\mathcal{X}^\bot$-projective cover and $\mathcal{X}$-injective envelope. In particular, we prove the following result.
	

\begin{theorem}\label{p.2}
Let $R$ be a $\mathcal{X}^\bot$-hereditary ring. If the class of all $\mathcal{X}$-projective $R$-modules is closed under direct limits, then every $R$-module $M$ has a $\mathcal{X}^\bot$-projective cover and an $\mathcal{X}$-injective envelope.
\end{theorem}

	Self injective rings were introduced by Johnson and Wong in \cite{john}. A ring $R$ is said to be self injective if $R$ over itself is an injective module. In this paper, we introduce self $\mathcal{X}$-injective ring (see Definition \ref{self}). Mao and Ding \cite{mao3} proved that an $FI$-injective $R$-module decomposes into an injective and a reduced $FI$-injective $R$-module over a coherent ring. Similarly, we can prove the following result:
	
\begin{theorem}\label{p.3}
Let $R$ be a self $\mathcal{X}$-injective and $\mathcal{X}^\bot$-hereditary ring. If the class of all $\mathcal{X}$-projective $R$-modules is closed under direct limits, then an $R$-module $M$ is $\mathcal{X}^\bot$-projective if and only if $M$ is a direct sum of a projective $R$-module and a coreduced $\mathcal{X}^\bot$-projective $R$-module.
\end{theorem}
	
	Enochs and Pinzon \cite{pin} proved that every module has an $FP$-injective cover over a coherent ring. We prove the following result that provides the existence of $\mathcal{W}$-injective cover, where $\mathcal{W}$ is the class of all pure projective modules.
	
\begin{theorem}\label{p.4}
Let $R$ be a coherent ring. Then every $R$-module $M$ has a $\mathcal{W}$-injective cover.
\end{theorem}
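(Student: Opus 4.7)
The plan is to reduce the statement to the classical existence of injective covers over a left Noetherian ring. The key observation is that, when $\mathcal{X}$ is the class of pure projective modules and $R$ is Noetherian, an $R$-module is $\mathcal{X}$-injective if and only if it is injective, so Theorem \ref{p.4} collapses to a well-known result.

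To establish this identification, I would first note that for any ring, $\mathcal{X}$-injectivity (with $\mathcal{X}$ the pure projectives) is equivalent to $FP$-injectivity. Indeed, by Warfield's theorem every pure projective is a direct summand of a direct sum of finitely presented modules, and $Ext^{1}_{R}(-,M)$ converts direct sums into direct products; hence $Ext^{1}_{R}(N,M)=0$ for every pure projective $N$ if and only if $Ext^{1}_{R}(F,M)=0$ for every finitely presented $F$. Now assume $R$ is left Noetherian. For every left ideal $I$, the ideal $I$ is finitely generated, $R/I$ is finitely presented, and so $Ext^{1}_{R}(R/I,M)=0$. Baer's criterion then forces $M$ to be injective; the converse inclusion is immediate.

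Having reduced the theorem to the existence of injective covers over left Noetherian rings, one may quote Enochs' classical theorem that such covers exist. If an internal proof is preferred, the standard steps are: (i) verify that over a Noetherian ring the class of injectives is closed under direct sums (Bass--Papp) and in fact under direct limits, using that $Ext^{1}_{R}(R/I,-)$ commutes with direct limits when $R/I$ is finitely presented; (ii) use the Matlis--Papp structure theorem to decompose every injective as a direct sum of indecomposable injectives, which makes the class of injectives a Kaplansky class; (iii) apply Enochs' general criterion that a Kaplansky class which is closed under direct limits and extensions is covering.

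The main obstacle in the internal route is step (i), the closure of injectives under direct limits; this is precisely where the Noetherian hypothesis is indispensable. Once that is in hand, the Kaplansky filtration coming from the structure theorem, together with the standard precover-to-cover upgrade, produces the desired covers in a routine fashion.
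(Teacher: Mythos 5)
Your proposal is correct, but it takes a genuinely different route from the paper. You observe that for $\mathcal{X}$ the class of pure projectives, $\mathcal{X}$-injectivity coincides with $FP$-injectivity (Warfield's theorem that pure projectives are direct summands of direct sums of finitely presented modules, plus $Ext^{1}_{R}(\bigoplus_{i} F_{i}, M) \cong \prod_{i} Ext^{1}_{R}(F_{i}, M)$), and that over a left Noetherian ring $FP$-injective equals injective by Baer's criterion; hence $\mathcal{U}_{\mathcal{X}} = \mathcal{I}_{0}$ and the theorem is literally Enochs' classical result that every module over a left Noetherian ring has an injective cover \cite{Eno1}. The paper never makes this identification; it instead transplants Pinzon's argument for absolutely pure covers \cite{pin}: Theorem \ref{3.7} constructs an $\mathcal{X}$-injective precover by a set-theoretic factorization argument resting on Bashir's theorem \cite{bashir} and on closure of $\mathcal{U}_{\mathcal{X}}$ under pure submodules and the corresponding quotients (Propositions \ref{6.3.1} and \ref{3.5}), and the cover is then obtained from closure under direct limits (Proposition \ref{3.8}) by the standard precover-to-cover upgrade. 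Your reduction buys brevity and rigor, and it exposes that under the Noetherian hypothesis the statement has no content beyond the classical theorem --- the notion of $\mathcal{X}$-injective cover is genuinely new only over non-Noetherian (e.g., coherent) rings, which is precisely the setting of \cite{pin}; conversely, the paper's machinery, unlike your reduction, would in principle survive in such settings where $\mathcal{U}_{\mathcal{X}} \neq \mathcal{I}_{0}$. One caution on your optional internal route: the Enochs--L\'opez-Ramos criterion in the form the paper itself uses (\cite[Theorem ~2.9]{lop}) requires the class to contain the projective modules, which $\mathcal{I}_{0}$ does not unless $R$ is quasi-Frobenius; so in your step (iii) you should instead argue precovers directly (by Matlis--Papp every injective decomposes into indecomposable injectives, which form a set up to isomorphism, so every map from an injective into $M$ factors through $\bigoplus_{N} N^{(Hom_{R}(N,M))}$) and then invoke the precover-to-cover theorem for classes closed under direct limits \cite{enochs} --- or simply rest on the citation of \cite{Eno1}, which is all the theorem needs.
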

  
	This paper is organized as follows: In Section 2, we recall some notions that are necessary for our proofs of the main results of this paper.

	In Section 3, we investigate the notions of $\mathcal{X}$-injective and $\mathcal{X}$-flat modules and give some characterizations of these classes of modules.
 
	In Section 4, we introduce $\mathcal{X}^\bot$-hereditary ring. Further, we investigate $\mathcal{X}^\bot$-projective module and give some characterizations. Further, we prove Theorem \ref{p.2} and Theorem \ref{p.3}.
	
	In Section 5, we prove that if $M$ is a submodule of an $\mathcal{X}$-injective $R$-module $A$, then $i \colon M \rightarrow A$ is a special $\mathcal{X}$-injective envelope of $M$ if and only if $A$ is an $\mathcal{X}^\bot$-projective essential extension of $M.$
	
	In the last section, we assume that $\mathcal{W}$ is the class of all pure projective modules and we prove that every module has a $\mathcal{W}$-injective  preenvelope. Moreover, we prove Theorem \ref{p.4}.
 
\section{preliminaries}

	In this section, we recall some known definitions and some terminology that will be used in the rest of the paper.

	Given a class $\mathscr{C}$ of left $R$-modules, we write
\begin{eqnarray*}
\mathscr{C}^{\bot} &=& \left\{N \,\in R\mbox{-}Mod \, | \ Ext_{R}^{1}(M, N) = 0 \, \mbox{ $\forall$ } M \in \mathscr{C}\right\}\\
^{\bot}\mathscr{C} &=& \left\{N \,\in R\mbox{-}Mod \, | \ Ext_{R}^{1}(N, M) = 0 \, \mbox{ $\forall$ } M \in \mathscr{C}\right\}.
\end{eqnarray*}

	Following \cite{Eno1}, we say that a map $f \in Hom_{R}(C, M)$ with $C \in \mathscr{C}$ is a $\mathscr{C}$-precover of $M$, if the group homomorphism $Hom_{R}(C', f)$ $\colon$ $Hom_{R}(C', C) \rightarrow Hom_{R}(C', M)$ is surjective for each $C'$ $\in \mathscr{C}$. A $\mathscr{C}$-precover $f \in Hom_{R}(C, M)$ of $M$ is called a $\mathscr{C}$-cover of $M$ if $f$ is right minimal. That is, if $fg = f$ implies that $g$ is an automorphism for each $g \in$ $End_{R}(C)$. $\mathscr{C} \subseteq$ $R\mbox{-}Mod$ is a precovering class (resp. covering class) provided that each module has a $\mathscr{C}$-precover (resp. $\mathscr{C}$-cover). Dually, we have the definition of $\mathscr{C}\mbox{-}$preenvelope (resp. $\mathscr{C}\mbox{-}$envelope).
 
 A $\mathscr{C}$-precover $f$ of $M$ is said to be \textit{special} \cite{tri} if $f$ is an epimorphism and $ker f \in\, \mathscr{C}^\bot$. 

 A $\mathscr{C}$-preenvelope $f$ of $M$ is said to be \textit{special} \cite{tri} if $f$ is a monomorphism and $coker f \in\, ^\bot\mathscr{C}$.
 
 A $\mathscr{C}$-envelope $\phi \colon M \rightarrow C$ is said to have the \textit{unique mapping property} \cite{Ding} if for any homomorphism $f \colon M \rightarrow C'$ with $C' \in \mathscr{C}$, there is a unique homomorphism $g \colon C \rightarrow C'$ such that $g\phi = f.$
 
 Recall that an $R$-module $M$ is called \textit{reduced} \cite{enochs} if it has no nonzero injective submodules. An $R$-module $M$ is said to be \textit{coreduced} \cite{chen} if it has no nonzero projective quotient modules.

	A module is said to be \textit{pure projective} \cite{pp} if it is projective with respect to pure exact sequence.
 
 A class $\mathscr{C}$ of left $R$-modules is said to be \textit{injectively resolving} \cite{tri} if $\mathscr{C}$ contains all injective modules and if given an exact sequence of left $R$-modules
\begin{center}
$0 \rightarrow A \rightarrow B \rightarrow C\rightarrow 0$
\end{center}
$C \in \mathscr{C}$ whenever $A, B \in \mathscr{C}.$

 \begin{definition}\label{0.2}
\begin{enumerate}
\item A pair $\mathfrak{C} = \left(\mathcal{A},\,\mathcal{B}\right)$ of classes of modules is called a \textit{cotorsion} \cite{tri} if $\mathcal{A} =\,^\bot\mathcal{B}$ and $\mathcal{B} = \mathcal{A}^{\bot}.$

\item A cotorsion theory $(\mathcal{A}, \mathcal{B})$ is said to be perfect \cite{lop} if every module has an $\mathcal{A}$-cover and a $\mathcal{B}$-envelope.
\end{enumerate}
\end{definition}

 For an $R$-module $M$, $fd(M)$ denote the flat dimension of $M$ and $id(M)$ denote the injective dimension of $M.$ 
 
 The $\mathcal{X}^\bot$-coresolution dimension of $M$, denoted by $cores.dim_{\mathcal{X}^\bot}(M)$, is defined to be the smallest nonnegative integer $n$ such that $Ext_{R}^{n + 1}(A, M) = 0$ for all $R$-modules $A \in \mathcal{X}$ (if no such $n$ exists, set  $cores.dim_{\mathcal{X}^\bot}(M) = \infty$), and  $cores.dim_{\mathcal{X}^\bot}(R)$ is defined as $\sup \{cores.dim_{\mathcal{X}^\bot}(M) | M \in R$-$Mod\}.$
 
 We denote by $\mathbb{Z}$ the ring of all integers, and by $\mathbb{Q}$ the field of all rational numbers. For a left $R$-module $M$, we denote by $M^{+} = Hom_{\mathbb{Z}}(M, \mathbb{Q}/ \mathbb{Z})$ the \textit{character module} of $M.$ $\mathcal{I}_0$ denotes the class of all injective left $R$-modules.

 For unexplained terminology we refer to \cite{and, rot}.

\section{$\mathcal{X}$-injective and $\mathcal{X}$-flat modules}

	We begin with the following definition:
	
\begin{definition}\label{2.0}\cite{mao2}
 A left $R$-module $M$ is called $\mathcal{X}$-injective if $Ext_{R}^{1}(X, M) = 0$ for all left $R$-modules $X \in \mathcal{X}.$
 A right $R$-module $N$ is said to be $\mathcal{X}$-flat if $Tor^{R}_{1}(N, X) = 0$ for all left $R$-modules $X \in \mathcal{X}.$
\end{definition}

Also, we denote by $\mathcal{X}^\bot$ the class of all $\mathcal{X}$-injective modules and $^\bot(\mathcal{X}^\bot)$ the class of all $\mathcal{X}^\bot$-projective $R$-modules.


\begin{proposition}\label{2.3} Let $M$ be an $R$-module. Then the following are true.
\begin{enumerate}
\item Let $\mathcal{I}_0 \subseteq \mathcal{X}.$ Then $M$ is injective if and only if $M$ is $\mathcal{X}$-injective and $id(M) \leq 1.$
\item $M$ is injective if and only if $M$ is $\mathcal{X}^\bot$-injective and $cores.dim_{\mathcal{X}^\bot}(M) \leq 1.$
\end{enumerate}
\end{proposition}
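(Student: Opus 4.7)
Both biconditionals share a trivial forward direction: an injective module $M$ satisfies $Ext^{i}_{R}(N,M)=0$ for every $N$ and every $i\geq 1$, so it is automatically both $\mathcal{X}$-injective and $\mathcal{U}_\mathcal{X}$-injective, with $id(M)=0$ and $cores.dim_{\mathcal{U}_\mathcal{X}}(M)=0$. I would dispose of this in one line per part.

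For the substantive direction of (2), the plan is to embed $M$ into an injective module $E$ (say its injective envelope) and work with
\begin{equation*}
0 \rightarrow M \rightarrow E \rightarrow E/M \rightarrow 0.
\end{equation*}
For an arbitrary $X \in \mathcal{X}$, applying $Hom_{R}(X,-)$ and using $Ext^{i}_{R}(X,E)=0$ for $i\geq 1$ yields, from the long exact Ext sequence, the isomorphism $Ext^{1}_{R}(X, E/M) \cong Ext^{2}_{R}(X, M)$. The hypothesis $cores.dim_{\mathcal{U}_\mathcal{X}}(M) \leq 1$ makes the right-hand side vanish, so $E/M \in \mathcal{U}_\mathcal{X}$. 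Reading ``$M$ is $\mathcal{U}_\mathcal{X}$-injective'' as $M \in \mathcal{U}_\mathcal{X}^{\bot}$ (the natural interpretation parallel to ``$\mathcal{X}$-injective''), we then get $Ext^{1}_{R}(E/M, M) = 0$; the displayed sequence splits, and $M$ becomes a direct summand of the injective module $E$, hence is itself injective.

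The argument for (1) has the same shape. Starting from the same embedding $0 \to M \to E \to E/M \to 0$ with $E$ injective, the assumption $id(M) \leq 1$ now supplies $Ext^{1}_{R}(N, E/M) \cong Ext^{2}_{R}(N, M) = 0$ for \emph{every} module $N$, so $E/M$ is itself injective. The remaining task—and the main obstacle—is the splitting step $Ext^{1}_{R}(E/M, M) = 0$, which asks us to feed the injective module $E/M$ into the $\mathcal{X}$-injectivity of $M$. I expect this to succeed by the same cotorsion-pair machinery as in (2): $E/M$ lies in $\mathcal{U}_\mathcal{X}$ automatically (every injective is), and the combination of $\mathcal{X}$-injectivity of $M$ with $id(M)\leq 1$ should place $M$ in $\mathcal{U}_\mathcal{X}^{\bot}$ through the cotorsion pair $(\widetilde{\mathcal{U}_\mathcal{X}},\mathcal{U}_\mathcal{X})$ cogenerated by $\mathcal{X}$. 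Once the vanishing is established, the sequence splits, $M$ is a direct summand of $E$, and (1) is proved.
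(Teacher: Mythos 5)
Your one-line forward directions and your proof of (2) are correct and essentially identical to the paper's argument: embed $M$ in $E(M)$, use $Ext^{1}_{R}(X,E(M))=0$ together with $cores.dim_{\mathcal{U}_\mathcal{X}}(M)\leq 1$ to conclude $E(M)/M\in\mathcal{U}_\mathcal{X}$, then split the sequence via $M\in\mathcal{U}_\mathcal{X}^{\bot}$; your reading of ``$\mathcal{U}_\mathcal{X}$-injective'' as membership in $\mathcal{U}_\mathcal{X}^{\bot}$ is also the one the paper's proof actually uses.

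The gap is in the last step of (1). Having shown $E/M$ is injective, you need $Ext^{1}_{R}(E/M,M)=0$, and you propose to obtain it by placing $M$ in $\mathcal{U}_\mathcal{X}^{\bot}$ ``through the cotorsion pair $(\widetilde{\mathcal{U}_\mathcal{X}},\mathcal{U}_\mathcal{X})$.'' That cannot work: $\mathcal{X}$-injectivity of $M$ says exactly $M\in\mathcal{X}^{\bot}=\mathcal{U}_\mathcal{X}$, and the cotorsion pair only gives $Ext^{1}_{R}(A,U)=0$ for $A\in\widetilde{\mathcal{U}_\mathcal{X}}$ and $U\in\mathcal{U}_\mathcal{X}$; it says nothing about $Ext^{1}_{R}(U',M)$ with $U'\in\mathcal{U}_\mathcal{X}$, and there is no general implication from $\mathcal{U}_\mathcal{X}$ to $\mathcal{U}_\mathcal{X}^{\bot}$. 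In fact no machinery can close the gap as stated, because (1) is false for arbitrary $\mathcal{X}$: with $\mathcal{X}=\{0\}$ every module is $\mathcal{X}$-injective, yet $M=\mathbb{Z}$ over $R=\mathbb{Z}$ has $id(M)=1$ and is not injective. The missing ingredient is the hypothesis $\mathcal{I}_0\subseteq\mathcal{X}$, stated explicitly in Proposition \ref{2.5} and used silently in Proposition \ref{2.3}: with it, your argument closes immediately, since $E/M$ injective gives $E/M\in\mathcal{X}$ and hence $Ext^{1}_{R}(E/M,M)=0$ directly from $\mathcal{X}$-injectivity of $M$, with no cotorsion theory at all. (The paper's own proof of (1) runs the dual computation: for arbitrary $N$, the sequence $0\rightarrow N\rightarrow E(N)\rightarrow L\rightarrow 0$ and $id(M)\leq 1$ yield $Ext^{2}_{R}(L,M)=0$ and so a surjection $Ext^{1}_{R}(E(N),M)\rightarrow Ext^{1}_{R}(N,M)$, whose source vanishes by $\mathcal{X}$-injectivity --- again precisely because $E(N)\in\mathcal{I}_0\subseteq\mathcal{X}$; your structural route for (1), parallel to (2), is equally valid once this hypothesis is invoked.)
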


\begin{proof}
$(1).$ The direct implication is clear. Conversely, let $M$ be $\mathcal{X}$-injective and $id(M) \leq 1.$ For any $R$-module $N,$ consider an exact sequence $0 \rightarrow N \rightarrow E(N) \rightarrow L \rightarrow 0$ with $E(N)$ an injective envelope of $N.$ We have an exact sequence \[\cdots \rightarrow Ext^{1}_{R}(E(N), M) \rightarrow Ext^{1}_{R}(N, M) \rightarrow Ext^{2}_{R}(L, M) \rightarrow \cdots.\] Since $id(M) \leq 1$, $Ext_{R}^{2}(L, M) = 0$ and hence $M$ is injective.

$(2).$ The direct implication is clear by the definition of an injective module. Conversely, let $M$ be $\mathcal{X}^\bot$-injective and $cores.dim_{\mathcal{X}^\bot}(M) \leq 1$. Consider an exact sequence $0 \rightarrow M \rightarrow E(M) \rightarrow L \rightarrow 0$ with $E(M)$ an injective envelope of $M.$ For any $R$-module $X \in \mathcal{X},$  we have an exact sequence $\cdots \rightarrow Ext^{1}_{R}(X,E(M)) \rightarrow Ext^{1}_{R}(X, L) \rightarrow Ext^{2}_{R}(X, M) \rightarrow \cdots$. Since $cores.dim_{\mathcal{X}^\bot}(M) \leq 1$, $Ext_{R}^{2}(X, M) = 0$ and hence $L$ is $\mathcal{X}$-injective. Therefore,  $Ext^{1}_{R}(L, M) = 0$, so that the exact sequence is split. It follows that $M$ is a direct summand of $E,$ as desired.
\end{proof}

We now give some of the characterizations of $\mathcal{X}$-injective module:
\begin{proposition}\label{2.5}
Let $\mathcal{I}_0 \subseteq \mathcal{X}.$The following are equivalent for a left $R$-module $M:$
\begin{enumerate}
  \item [(1)] $M$ is $\mathcal{X}$-injective;
  \item [(2)] For every exact sequence $0 \rightarrow M \rightarrow E \rightarrow L \rightarrow 0$, with $E \in \mathcal{X},$ $E \rightarrow L$ $\mathcal{X}$-precover of $L;$
  \item [(3)] $M$ is a kernel of an $\mathcal{X}$-precover $f \colon A \rightarrow B$ with $A$ an injective module;
  \item [(4)] $M$ is injective with respect to every exact sequence $0 \rightarrow A \rightarrow B \rightarrow C \rightarrow 0$, with $C \in \mathcal{X}.$
\end{enumerate}
\end{proposition}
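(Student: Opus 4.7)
The plan is to prove the cycle $(1) \Rightarrow (2) \Rightarrow (3) \Rightarrow (1)$ and then $(1) \Leftrightarrow (4)$ separately. The condition $\mathcal{I}_0 \subseteq \mathcal{X}$ is used only to guarantee that the injective envelope lives in $\mathcal{X}$, which is exactly what bridges (1) and (3).

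For $(1) \Rightarrow (2)$, given $0 \to M \to E \to L \to 0$ with $E \in \mathcal{X}$, I would apply $\mathrm{Hom}_R(X,-)$ for an arbitrary $X \in \mathcal{X}$; the long exact sequence produces $\mathrm{Hom}_R(X,E) \to \mathrm{Hom}_R(X,L) \to \mathrm{Ext}^1_R(X,M) = 0$, which is literally the precover condition. For $(2) \Rightarrow (3)$, I would take an injective envelope $0 \to M \to E(M) \to L \to 0$; here $E(M) \in \mathcal{I}_0 \subseteq \mathcal{X}$, so by (2) the map $E(M) \to L$ is an $\mathcal{X}$-precover with injective domain and kernel $M$.

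The main obstacle is $(3) \Rightarrow (1)$, because an $\mathcal{X}$-precover is not required to be surjective, so the image of $f$ has to be handled carefully. Suppose $M = \ker f$ for an $\mathcal{X}$-precover $f\colon A \to B$ with $A$ injective. From the short exact sequence $0 \to M \to A \to \mathrm{im}(f) \to 0$ and $X \in \mathcal{X}$ I obtain $\mathrm{Hom}_R(X,A) \to \mathrm{Hom}_R(X,\mathrm{im}(f)) \to \mathrm{Ext}^1_R(X,M) \to \mathrm{Ext}^1_R(X,A) = 0$. To kill $\mathrm{Ext}^1_R(X,M)$ it suffices to show the first map is surjective: given $g\colon X \to \mathrm{im}(f)$, compose with $\mathrm{im}(f) \hookrightarrow B$ and use the precover property of $f$ to lift to some $h\colon X \to A$ with $fh$ equal to the composite; since $\mathrm{im}(f) \hookrightarrow B$ is monic, this forces $f h = g$ after corestricting to $\mathrm{im}(f)$, as required.

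Finally, for $(1) \Leftrightarrow (4)$ I would use the standard Baer-type argument. The implication $(1) \Rightarrow (4)$ is immediate by applying $\mathrm{Hom}_R(-,M)$ to $0 \to A \to B \to C \to 0$ (with $C \in \mathcal{X}$) and invoking $\mathrm{Ext}^1_R(C,M) = 0$. For $(4) \Rightarrow (1)$, given $X \in \mathcal{X}$, any extension $0 \to M \to E \to X \to 0$ represents a class in $\mathrm{Ext}^1_R(X,M)$; applying (4) to this very sequence extends $\mathrm{id}_M$ to a retraction $E \to M$, so the sequence splits and the class vanishes, whence $\mathrm{Ext}^1_R(X,M) = 0$.
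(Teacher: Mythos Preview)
Your proof is correct and follows the same cycle $(1)\Rightarrow(2)\Rightarrow(3)\Rightarrow(1)$, $(1)\Leftrightarrow(4)$ as the paper. Two minor differences: in $(3)\Rightarrow(1)$ you are actually more careful than the paper, which silently treats the precover $f\colon A\to B$ as though it were onto and writes $A/M$ in place of $\mathrm{im}(f)$, whereas you correctly factor through $\mathrm{im}(f)\hookrightarrow B$; and in $(4)\Rightarrow(1)$ you split an arbitrary extension of $M$ by $X$ directly via $\mathrm{id}_M$, while the paper instead applies $(4)$ to a projective presentation $0\to K\to P\to X\to 0$ and reads off $\mathrm{Ext}^1_R(X,M)=0$ from the resulting long exact sequence. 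Both variants are standard and yield the same conclusion.
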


\begin{proof} 
\par $(1) \Rightarrow (2)$. Consider an exact sequence 
\begin{center}
$0 \rightarrow M \rightarrow E \rightarrow L \rightarrow 0$,
\end{center}
where $E \in \mathcal{X}.$ Then by hypothesis $Hom_{R}(E', E) \rightarrow Hom_{R}(E', L)$ is surjective for all left $R$-modules $E' \in \mathcal{X},$  as desired.
\par $(2) \Rightarrow (3)$. Let $E(M)$ be an injective hull of $M$ and consider the exact sequence $0 \rightarrow M \rightarrow E(M) \stackrel{f}{\rightarrow} E(M)/M \rightarrow 0.$ Since $E(M)$ is injective, it belongs to $\mathcal{X}.$ So assertions $(3)$ holds.
\par $(3) \Rightarrow (1)$. Let $M$ be a kernel of an $\mathcal{X}$-precover $f \colon A \rightarrow B$ with $A$ an injective module. Then we have an exact sequence $0 \rightarrow M \rightarrow A \rightarrow A/M \rightarrow 0$. Therefore, for any left $R$-module $N \in \mathcal{X},$ the sequence $Hom_{R}(N, A)$ $\rightarrow Hom_{R}(N, A/M) \rightarrow Ext_{R}^{1}(N, M) \rightarrow 0$ is exact. By hypothesis, $Hom_{R}(N, A) \rightarrow Hom_{R}(N, A/M)$ is surjective. Thus $Ext_{R}^{1}(N, M) = 0$, and hence (1) follows.
\par $(1) \Rightarrow (4)$. Consider an exact sequence
\begin{center}
$0 \rightarrow A \rightarrow B \rightarrow C \rightarrow 0$,
\end{center}
where $C \in \mathcal{X}.$ Then $Hom_{R}(B, M) \rightarrow Hom_{R}(A, M)$ is surjective, as desired.
\par $(4) \Rightarrow (1)$. For each left $R$-module $N \in \mathcal{X},$ there exists a short exact sequence $0 \rightarrow K \rightarrow P \rightarrow N \rightarrow 0$ with $P$ a projective module, which induces an exact sequence $Hom_{R}(P, M) \rightarrow Hom_{R}(K, M) \rightarrow Ext_{R}^{1}(N, M) \rightarrow 0$. By hypothesis, $Hom_{R}(P, M) \rightarrow Hom_{R}(K, M) \rightarrow 0$ is exact. Thus $Ext_{R}^{1}(N, M) = 0$, and hence (1) follows.
\end{proof}

\begin{example}\label{2.13}
Let $(R, \mathfrak{m})$ be a commutative Noetherian and complete local domain. Assume that the $depth R \geq 2$ and $cores.dim_{\mathcal{X}^\bot}(R) \leq 1$. Then $R/\mathfrak{m} \oplus E(R)$ is an  $(\mathcal{X}^\bot)^\bot$-injective module.
\end{example}

\begin{proof}
Consider the residue field $k = R/\mathfrak{m}$ and an exact sequence $0 \rightarrow k \rightarrow E(k) \stackrel{\phi}{\rightarrow} E(k)/k \rightarrow 0$. If $G$ is an $R$-module, the sequence $Hom_{R}(G, E(k)) \rightarrow Hom_{R}(G, E(k)/k) \rightarrow Ext_{R}^{1}(G, k) \rightarrow 0$ is exact. By \cite[p ~43]{xu}, $\phi$ is an injective cover of $E(k)/k.$ Since $cores.dim_{\mathcal{X}^\bot}(R) \leq 1,$ then the class of all injective $R$-modules and the class of all $\mathcal{X}^\perp$-injective $R$-modules are equal, that is, $\mathcal{I}_0 = (\mathcal{X}^\perp)^\perp$. Clearly, $\phi$ is an $\mathcal{X}^\bot$-injective cover of $E(k)/k$. Thus $Hom_{R}(G^\prime, E(k)) \rightarrow Hom_{R}(G^\prime, E(k)/k)$ is surjective for every $\mathcal{X}^\perp$-injective $R$-module $G^\prime$. So $Ext_{R}^{1}(G^\prime, k) = 0$ for every $\mathcal{X}^\perp$-injective $R$-module $G^\prime$, and hence $k$ is $(\mathcal{X}^\bot)^\bot$-injective. On the other hand $E(R)$ is injective and so $(\mathcal{X}^\perp)^\perp$-injective. Therefore $k \oplus E(R)$ is $(\mathcal{X}^\perp)^\perp$-injective.
\end{proof}

\begin{example}\label{2.15}
Let $(R, \mathfrak{m})$ be a commutative Noetherian and complete local ring. Assume that the $depth R \leq 1$. Then the residue field $k = R/\mathfrak{m}$ is not $(\mathcal{X}^\bot)^\bot$-injective.
\end{example}

\begin{proof}
Consider an exact sequence $0 \rightarrow k \rightarrow E(k) \stackrel{\phi}{\rightarrow} E(k)/k \rightarrow 0$. Hence by \cite[Corollary ~5.4.7]{enochs}, $\phi$ is not an injective cover of $E(k)/k$. This implies that $\phi$ is not an $\mathcal{X}^\bot$-injective cover of $E(k)/k$. Then by Proposition \ref{2.5}, $k$ is not $(\mathcal{X}^\bot)^\bot$-injective.
\end{proof}

We now give some characterizations of $\mathcal{X}$-flat module:

\begin{proposition}\label{2.16}
The following are equivalent for a right $R$-module $M$:
\begin{enumerate}
\item $M$ is $\mathcal{X}$-flat;
\item For every exact sequence $0 \rightarrow A \rightarrow B \rightarrow C \rightarrow 0$ with $C \in \mathcal{X},$ the functor $M \otimes_R -$ preserves the exactness;
\item $Ext_{R}^{1}(M, G^{+}) = 0$ for all $G \in \mathcal{X};$
\item $M^{+}$ is $\mathcal{X}$-injective. 
\end{enumerate}
\end{proposition}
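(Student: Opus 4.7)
\emph{Proof plan.} The four conditions divide naturally into two families: $(1)$ and $(2)$ are equivalent reformulations of ``$Tor_{1}^{R}(M,-)$ kills $\mathcal{X}$'' via the long exact sequence of $Tor$, while $(3)$ and $(4)$ translate that same $Tor$-vanishing into $Ext$-vanishing through the character functor. My plan is therefore to prove $(1) \Leftrightarrow (2)$ by the standard long-exact-sequence argument, and to deduce both $(1) \Leftrightarrow (3)$ and $(1) \Leftrightarrow (4)$ from the natural isomorphism
\[
Ext_{R}^{n}(A, B^{+}) \;\cong\; Tor_{n}^{R}(A, B)^{+},
\]
valid whenever the tensor product $A \otimes_{R} B$ makes sense, because $\mathbb{Q}/\mathbb{Z}$ is an injective $\mathbb{Z}$-module and hence $Hom_{\mathbb{Z}}(-, \mathbb{Q}/\mathbb{Z})$ is exact.

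For $(1) \Rightarrow (2)$, I would apply $M \otimes_{R} -$ to $0 \to A \to B \to C \to 0$ with $C \in \mathcal{X}$ and read off exactness of $0 \to M \otimes_{R} A \to M \otimes_{R} B \to M \otimes_{R} C \to 0$ from the long $Tor$-sequence, using $Tor_{1}^{R}(M, C) = 0$. For the converse, given $X \in \mathcal{X}$, I would take a short exact sequence $0 \to K \to P \to X \to 0$ with $P$ projective; by $(2)$ the map $M \otimes_{R} K \to M \otimes_{R} P$ is injective, but its kernel is exactly $Tor_{1}^{R}(M, X)$, which forces $(1)$.

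For $(1) \Leftrightarrow (3)$, specializing the displayed isomorphism with $A = M$ (right) and $B = G \in \mathcal{X}$ (left) gives $Ext_{R}^{1}(M, G^{+}) \cong Tor_{1}^{R}(M, G)^{+}$, and a module is zero iff its character module is, so $(3)$ is precisely $Tor_{1}^{R}(M, G) = 0$ for all $G \in \mathcal{X}$. For $(1) \Leftrightarrow (4)$, since $M$ is a right $R$-module, $M^{+}$ is a \emph{left} $R$-module; condition $(4)$ reads $Ext_{R}^{1}(X, M^{+}) = 0$ for every $X \in \mathcal{X}$, and the analogous isomorphism for $Ext$ of left $R$-modules yields $Ext_{R}^{1}(X, M^{+}) \cong Tor_{1}^{R}(M, X)^{+}$, reducing $(4)$ to $(1)$ by the same character-module trick.

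I do not anticipate any substantive obstacle; the only care required is bookkeeping of sides, since $G^{+}$ is a right $R$-module while $M^{+}$ is a left $R$-module, so $(3)$ and $(4)$ live in $Ext$-groups of different module categories, though both collapse to the single $Tor$-vanishing condition $(1)$.
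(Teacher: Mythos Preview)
Your proposal is correct and matches the paper's proof essentially line for line: the paper also proves $(1)\Leftrightarrow(2)$ via the long exact $Tor$-sequence applied to a projective presentation of $G\in\mathcal{X}$, and derives $(1)\Leftrightarrow(3)$ and $(1)\Leftrightarrow(4)$ directly from the natural isomorphisms $Tor_{1}^{R}(M,G)^{+}\cong Ext_{R}^{1}(M,G^{+})$ and $Ext_{R}^{1}(G,M^{+})\cong Tor_{1}^{R}(M,G)^{+}$. Your added remark on the side bookkeeping for $G^{+}$ versus $M^{+}$ is a helpful clarification that the paper leaves implicit.
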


\begin{proof}
$(1) \Rightarrow (2).$ Consider an exact sequence  $0 \rightarrow A \rightarrow B \rightarrow C \rightarrow 0$ with $C \in \mathcal{X}.$ Since $M$ is $\mathcal{X}$-flat, $Tor^{R}_{1}(M, C) = 0$. Hence the functor $M \otimes -$ preserves the exactness.

$(2) \Rightarrow (1).$ Let $G \in \mathcal{X}.$ Then there exists a short sequence $0 \rightarrow K \rightarrow F \rightarrow G \rightarrow 0$ with $F$ a projective module, which induces an exact sequence $0 \rightarrow Tor^{R}_{1}(M, G) \rightarrow M \otimes K \rightarrow M \otimes F \rightarrow M \otimes G \rightarrow 0$. By hypothesis, $Tor^{R}_{1}(M, G) = 0$. Thus $M$ is $\mathcal{X}$-flat.

$(1)\Leftrightarrow(3).$ It follows from the natural isomorphism \cite[p 34]{Fuchs} $Tor^{R}_{1}(M, G)^{+} \cong Ext_{R}^{1}(M, G^{+})$.

$(1)\Leftrightarrow(4).$ It follows from the natural isomorphism \cite[VI 5.1]{car} $Ext_{R}^{1}(G, M^{+}) \cong Tor^{R}_{1}(M, G)^{+}.$
\end{proof}

\begin{proposition}\label{2.4}
Let $R$ be a coherent ring. Then a right $R$-module $N$ is flat if and only if $N$ is $\mathcal{X}$-flat and $fd(N) \leq 1$.
\end{proposition}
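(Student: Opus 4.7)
The strategy is to reduce the flatness of $N$ to the injectivity of its character module $N^{+}$ and then appeal to Proposition \ref{2.3}(1) applied to $N^{+}$. The forward direction is immediate: a flat right $R$-module $N$ satisfies $Tor^{R}_{1}(N,X)=0$ for every left $R$-module $X$, in particular for every $X\in\mathcal{X}$, and $fd(N)=0\le 1$.

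For the converse, suppose $N$ is $\mathcal{X}$-flat with $fd(N)\le 1$. First I would invoke Proposition \ref{2.16}(4) to conclude that the left $R$-module $N^{+}$ is $\mathcal{X}$-injective. Next, using the natural isomorphism $Ext^{n}_{R}(M,N^{+})\cong Tor^{R}_{n}(N,M)^{+}$ (the same duality that already appears in the proof of Proposition \ref{2.16}), I would transfer the flat-dimension hypothesis on $N$ into an injective-dimension bound on $N^{+}$: for every left $R$-module $M$ the vanishing $Tor^{R}_{2}(N,M)=0$ forces $Ext^{2}_{R}(M,N^{+})=0$, so $id(N^{+})\le 1$. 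Proposition \ref{2.3}(1) is now applicable to $N^{+}$ and yields that $N^{+}$ is injective. Since $N$ is flat if and only if $N^{+}$ is injective (another instance of the same $Ext$-$Tor$ duality, using that $\mathbb{Q}/\mathbb{Z}$ is a cogenerator in $\mathbb{Z}$-$Mod$), we conclude that $N$ is flat.

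The one point requiring some care is that Proposition \ref{2.3}(1) is being used within its intended scope: its proof passes through the injective envelope $E(N^{+})$ and so implicitly needs $\mathcal{I}_{0}\subseteq\mathcal{X}$, the assumption the paper uses whenever injective envelopes must belong to $\mathcal{X}$. The coherence of $R$ does not intervene explicitly in this duality route; it would become essential only if one wished to mimic the proof of Proposition \ref{2.3}(1) directly on the $Tor$ side, where one would need a flat-cover or pure-exact-sequence substitute for the injective envelope used there, and this is precisely where coherence does its work.
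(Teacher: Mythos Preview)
Your argument is correct and follows the same duality route as the paper: pass to $N^{+}$, show it is $\mathcal{X}$-injective with $id(N^{+})\le 1$, invoke Proposition~\ref{2.3}, and deduce that $N$ is flat. The only difference is that the paper obtains $id(N^{+})\le 1$ by citing Fieldhouse's equality $fd(N)=id(N^{+})$ over a coherent ring, whereas you extract it directly from the isomorphism $Ext^{n}_{R}(M,N^{+})\cong Tor^{R}_{n}(N,M)^{+}$; your remark that coherence is not actually used along this route is accurate, since that isomorphism (and hence the equality $fd(N)=id(N^{+})$) holds over any ring.
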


\begin{proof} ``only if" part is trivial. Conversely, suppose that $N$ is $\mathcal{X}$-flat. By Remark \ref{2.16}, $N^{+}$ is $\mathcal{X}$-injective. By \cite[Theorem ~2.1]{field}, $fd(N) = id(N^{+})$. Then $cores.dim_{\mathcal{X}^\bot}(N^{+}) \leq 1$ since $cores.dim_{\mathcal{X}^\bot}(N^{+}) \leq id(N^{+})$. By Proposition \ref{2.3}, $N^{+}$ is injective and hence $N$ is flat.
\end{proof}

\section{$\mathcal{X}^\bot$-projective cover and $\mathcal{X}$-injective envelope}

Now, we introduce $\mathcal{X}$-projective module.

\begin{definition}
An $R$-module $M$ is called a $\mathcal{X}$-projective if $Ext_{R}^{1}(M, X) = 0$ for all $R$-modules $X \in \mathcal{X}$.
\end{definition}

\begin{example}\label{2.1.0}
Let $(R, \mathfrak{m})$ be a complete local ring. Assume that the $cores.dim_{\mathcal{X}^\bot}(R) \newline \leq 1$. Then $E(k)/k$ is $(\mathcal{X}^\bot)^\bot$-projective, where $k = R/\mathfrak{m}$ is the residue field and $E(M)$ is $\mathcal{X}^\bot$-injective envelope of $k$.
\end{example}

\begin{proof}
Consider an exact sequence $0 \rightarrow k \rightarrow E(k) \rightarrow E(k)/k \rightarrow 0,$ where $E()$ is an injective envelope of $k.$ Since $cores.dim_{\mathcal{X}^\bot}(R) \leq 1,$ then $\mathcal{I}_0 = (\mathcal{X}^\perp)^\perp$. It follows that $E(k)$ is an $\mathcal{X}^\perp$-injective envelope of $k.$ Since the class of all $(\mathcal{X}^\bot)^\bot$-projective modules is closed under extensions, then by \cite [Lemma ~2.1.2]{xu} $E(k)/k$ is $(\mathcal{X}^\bot)^\bot$-projective.
\end{proof}

We now introduce the following definition

\begin{definition}
A ring $R$ is called a left $\mathcal{X}$-hereditary if every left ideal of $R$ is $\mathcal{X}$-projective.
\end{definition}


\begin{remark}\label{2.1.2}
If $\mathcal{X}$ is the class of injective left $R$-modules, then every ring is $\mathcal{X}$-hereditary. It is also easy to see that a ring is left hereditary if and only if $R$ is $\mathcal{X}$-hereditary for every class $\mathcal{X}$ of left $R$-modules.
\end{remark}

 Given a class $\mathcal{X}$ of left $R$-modules, we denote by $\mathcal{X}^{\bot_2}$ the class \[\{N \in R\mbox{-}Mod\,|\, Ext_R^2(X, N) = 0 \mbox{ for every } X \in \mathcal{X}\}.\]

\begin{example}\label{2.1.2.1}
Let $R$ be a commutative Noetherian ring. If $\mathcal{X} = \{R / \mathfrak{p} \colon \mathfrak{p} \in Spec R\},$ then $R$ is a $\mathcal{X}^{\bot_2}$-hereditary ring.
\end{example}

\begin{proof}
Let $I$ be an ideal of $R$. We claim that $I$ is $\mathcal{X}^{\bot_2}$-projective. By hypothesis, $Ext_R^2(R/\mathfrak{p}, G) = 0$ for all $\mathfrak{p} \in Spec R$ and for all $G \in \mathcal{X}^{\bot_2}.$ It follows that $id(G) \leq 1.$ Thus  $Ext_R^2(R/I, G) = 0$ for all ideals $I$ of $R$  and for all $G \in \mathcal{X}^{\bot_2}.$ Consider an exact sequence $0 \rightarrow I \rightarrow R \rightarrow R/I \rightarrow 0.$ So $Ext_R^1(I, G) = 0$ for all $G \in \mathcal{X}^{\bot_2}.$ Hence $I$ is $\mathcal{X}^{\bot_2}$-projective, as desired.
\end{proof}

Note that an $R$-module $M$ is called a $\mathcal{X}^\bot$-projective if $Ext_{R}^{1}(M, U) = 0$ for all $R$-modules $U \in \mathcal{X}^\bot$. Clearly, $(^\bot(\mathcal{X}^\bot), \mathcal{X}^\bot)$ is a cotorsion theory.

\begin{proposition}\label{2.1.3}
A ring $R$ is left $\mathcal{X}^\bot$-hereditary if and only if every submodule of a $\mathcal{X}^\bot$-projective left $R$-module is $\mathcal{X}^\bot$-projective.
\end{proposition}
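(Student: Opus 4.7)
The plan is to mimic the classical equivalence ``hereditary $\Leftrightarrow$ every submodule of a projective is projective,'' replacing ordinary Ext-vanishing by Ext-vanishing against $\mathcal{U}_\mathcal{X}$.

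The ``if'' direction is immediate: $R$ is a projective (hence $\mathcal{U}_\mathcal{X}$-projective) left $R$-module, so by hypothesis every submodule of $R$, that is every left ideal, is $\mathcal{U}_\mathcal{X}$-projective; so $R$ is $\mathcal{U}_\mathcal{X}$-hereditary.

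For the ``only if'' direction, the key step is to upgrade the hypothesis (which talks only about ideals of $R$) to a statement about arbitrary modules. Fix $U \in \mathcal{U}_\mathcal{X}$. For every left ideal $I$, apply $\mathrm{Hom}_R(-,U)$ to the short exact sequence
\[
0 \rightarrow I \rightarrow R \rightarrow R/I \rightarrow 0,
\]
and use $\mathrm{Ext}^i_R(R,U)=0$ for $i\ge 1$ together with the assumption $\mathrm{Ext}^1_R(I,U)=0$ (from $I$ being $\mathcal{U}_\mathcal{X}$-projective) to conclude $\mathrm{Ext}^2_R(R/I,U)=0$. Then invoke the Baer-type criterion for injective dimension, namely that $id(U)\le n$ iff $\mathrm{Ext}^{n+1}_R(R/I,U)=0$ for every left ideal $I$ (a standard result, e.g.\ in Rotman), to deduce $id(U)\le 1$ for every $U\in\mathcal{U}_\mathcal{X}$.

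Now let $P$ be $\mathcal{U}_\mathcal{X}$-projective (or just projective) and $N\subseteq P$. Applying $\mathrm{Hom}_R(-,U)$ to $0\to N\to P\to P/N\to 0$ yields the exact sequence
\[
\mathrm{Ext}^1_R(P,U) \rightarrow \mathrm{Ext}^1_R(N,U) \rightarrow \mathrm{Ext}^2_R(P/N,U).
\]
The left-hand term vanishes since $P$ is $\mathcal{U}_\mathcal{X}$-projective, and the right-hand term vanishes since $id(U)\le 1$. Hence $\mathrm{Ext}^1_R(N,U)=0$ for every $U\in\mathcal{U}_\mathcal{X}$, proving $N\in\widetilde{\mathcal{U}_\mathcal{X}}$.

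The main obstacle is the inductive strengthening from ``ideals of $R$'' to ``all modules'', which is exactly supplied by the Baer-type criterion for injective dimension; once this is cited, everything else is a short diagram chase in a long exact Ext sequence.
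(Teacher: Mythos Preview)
Your proof is correct and follows essentially the same approach as the paper: first show that $\mathrm{Ext}^2_R(R/I,U)=0$ for every ideal $I$ and every $U\in\mathcal{U}_\mathcal{X}$, conclude $id(U)\le 1$, and then use the long exact Ext sequence coming from $0\to N\to P\to P/N\to 0$. If anything, you are more explicit than the paper, which asserts ``Thus $id(U)\le 1$'' without naming the Baer-type criterion you cite.
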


\begin{proof}
Let $R$ be left $\mathcal{X}^\bot$-hereditary and $I$ be a left ideal of $R$. Then there is an exact sequence $0 \rightarrow I \rightarrow R \rightarrow R/I \rightarrow 0.$ By hypothesis, $Ext_R^{2}(R/I, U) = 0$ for any $\mathcal{X}$-injective left $R$-module $U.$ Thus $id(U) \leq 1.$ Let $G$ be a submodule of an $\mathcal{X}^\bot$-projective left $R$-module $H.$ Then, for any $\mathcal{X}$-injective left $R$-module $U$, the sequence $\cdots \rightarrow Ext_{R}^1(H, U) \rightarrow Ext_R^1(G, U) \rightarrow Ext_R^2(H/G, U) \rightarrow \cdots$ is exact. Thus $Ext_R^1(G, U) = 0$ since $H$ is $\mathcal{X}^\bot$-projective and $id(U) \leq 1.$ The reverse implication is clear.
\end{proof}




In general $^\bot(\mathcal{X}^\bot)$ is not closed under pure submodules, for example if $\mathcal{F}$ is the class of flat modules, then $^\bot(\mathcal{F}^\bot) = \mathcal{F}$ and this class is not closed submodules in general. As a consequence of Proposition \ref{2.1.3} we have

\begin{corollary}\label{1.1}
Let $R$ be a $\mathcal{X}^\bot$-hereditary ring. Then the class $^\bot(\mathcal{X}^\bot)$ is closed under pure submodules.
\end{corollary}


\begin{definition}\cite{lop}
Let $\mathcal{K}$ be a class of $R$-modules. Then $\mathcal{K}$ is said to be \textit{Kaplansky class} if there exists a cardinal $\aleph$ such that for every $M \in \mathcal{K}$ and for each $x \in M$, there exists a submodule $K$ of $M$ such that $x \in K \subseteq M, K, M/K \in \mathcal{K}$ and $Card(K) \leq \aleph.$
\end{definition}

\begin{proposition}\label{1.2}
Let $R$ be a $\mathcal{X}^\bot$-hereditary ring. Then $^\bot(\mathcal{X}^\bot)$ is Kaplansky.
\end{proposition}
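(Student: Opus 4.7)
The plan is to mimic the standard Kaplansky-style argument for cotorsion pairs, adapted from the proof of Proposition~2.6 of \cite{lop} to the pair $(\widetilde{\mathcal{U}_\mathcal{X}}, \mathcal{U}_\mathcal{X})$. Fix a cardinal $\aleph \geq |R| + \aleph_0$, chosen large enough to bound the size of representatives from a small cogenerating subclass of $\mathcal{U}_\mathcal{X}$. Given $M \in \widetilde{\mathcal{U}_\mathcal{X}}$ and $x \in M$, the goal is to produce a submodule $K$ with $x \in K \subseteq M$, $|K| \leq \aleph$, and both $K, M/K \in \widetilde{\mathcal{U}_\mathcal{X}}$.

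I would construct $K$ as the union of a continuous ascending chain $K_0 \subseteq K_1 \subseteq \cdots$ of submodules of $M$, each of cardinality at most $\aleph$, beginning with $K_0 = Rx$. At the step from $K_n$ to $K_{n+1}$, for each representative $U \in \mathcal{U}_\mathcal{X}$ and each $f \colon K_n \to U$, I would adjoin a set of elements of $M$ witnessing that $f$ extends after passing to the enlargement; such witnesses are guaranteed because $M \in \widetilde{\mathcal{U}_\mathcal{X}}$ forces $Ext_R^1(M, U) = 0$, so every map from a submodule can be extended at the cost of countably many new elements. A routine cardinal count keeps $|K_{n+1}| \leq \aleph$. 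Setting $K = \bigcup_n K_n$ yields a submodule of cardinality at most $\aleph$ containing $x$.

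That $K \in \widetilde{\mathcal{U}_\mathcal{X}}$ is immediate from Proposition~\ref{2.1.3}, since $R$ is $\mathcal{U}_\mathcal{X}$-hereditary and $K$ is a submodule of the $\mathcal{U}_\mathcal{X}$-projective module $M$. For $M/K \in \widetilde{\mathcal{U}_\mathcal{X}}$, the long exact sequence associated to $0 \to K \to M \to M/K \to 0$ reduces the vanishing of $Ext_R^1(M/K, U)$ (for $U \in \mathcal{U}_\mathcal{X}$) to the assertion that every $f \colon K \to U$ factors through $M$; this extension is assembled coherently from the stage-wise extensions produced in the construction, using that $Hom_R(K, U) = \varprojlim Hom_R(K_n, U)$ along the chain.

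The main obstacle will be the combinatorial bookkeeping at the inductive step: bounding the cardinality of the pairs $(U, f)$ to be processed at each stage by $\aleph$, and choosing the witnesses compatibly so that the chain-limit argument actually produces a global extension $K \to U$ that factors through $M$. Everything beyond this is standard cardinal arithmetic; the essential ingredients, the cotorsion-pair structure (implicit via $\mathcal{X}$ playing the role of a test set) together with the $\mathcal{U}_\mathcal{X}$-hereditary condition (which makes $K \in \widetilde{\mathcal{U}_\mathcal{X}}$ automatic), are already in place from the earlier results of the section.
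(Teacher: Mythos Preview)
Your approach diverges from the paper's and contains a genuine gap at the extension step. You write that ``$Ext_R^1(M,U)=0$, so every map from a submodule can be extended at the cost of countably many new elements''; but $Ext_R^1(M,U)=0$ says nothing about extending a homomorphism $K_n\to U$ along $K_n\hookrightarrow M$. Extendability of such a map is equivalent to the vanishing of $Ext_R^1(M/K_n,U)$, which is exactly the conclusion you are trying to reach for the final $K$, so the argument is circular at this point. (Concretely: take $M=R$, so $Ext_R^1(M,U)=0$ trivially, and $K_n=I$ any left ideal; maps $I\to U$ need not extend to $R$ unless $U$ is genuinely injective.) What you flag in the last paragraph as ``combinatorial bookkeeping'' is therefore not bookkeeping at all: the witnesses you want to adjoin simply need not exist. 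There is a second, set-theoretic issue as well: you invoke a ``small cogenerating subclass of $\mathcal{U}_\mathcal{X}$'' to keep the number of pairs $(U,f)$ processed at each stage bounded by~$\aleph$, but no such set is produced, and nothing in the hypotheses supplies one.

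The paper's argument proceeds along a different line and does not attempt to arrange extensions of maps. It places $M$ as a kernel inside a doubly infinite acyclic complex $\mathbb{P}_\bullet$ of projectives and runs a zig-zag L\"owenheim--Skolem construction (via \cite[Lemma~5.3.12]{enochs}) to carve out a small exact subcomplex $\mathbb{F}_\bullet\subseteq\mathbb{P}_\bullet$ whose terms are \emph{pure} submodules of the corresponding $P_i,P^i$. The hereditary hypothesis enters through Proposition~\ref{1.1} (closure of $\widetilde{\mathcal{U}_\mathcal{X}}$ under pure submodules), so each $F_i$, $F^i$ and each quotient $P_i/F_i$, $P^i/F^i$ lies in $\widetilde{\mathcal{U}_\mathcal{X}}$; then $F=\ker(F^0\to F^1)$ and $M/F$ are read off as kernels in the exact complexes $\mathbb{F}_\bullet$ and $\mathbb{P}_\bullet/\mathbb{F}_\bullet$. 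Your observation that Proposition~\ref{2.1.3} makes $K\in\widetilde{\mathcal{U}_\mathcal{X}}$ automatic for \emph{any} submodule $K\subseteq M$ is correct and does simplify one half, but it gives no control over $M/K$; the paper's purity mechanism is what handles both pieces at once.
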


\begin{proof}
Let $M \in\,^\bot(\mathcal{X}^\bot)$ and $x \in M$. Consider an exact sequence
\begin{center}
$\mathbb{M}_\bullet \colon 0 \rightarrow K \rightarrow P \stackrel{f}{\rightarrow} M \rightarrow 0$
\end{center}
with $P$ a projective module and by Proposition \ref{2.1.3} $K$ is $\mathcal{X}^\bot$-projective and we will construct a $\mathcal{X}^\bot$-projective submodule $F$ of $M$ with $x \in F$. 

 Since $x \in M$ and $f$ is surjective, there exists $y \in M$ such that $f(y) = x.$ Consider  $<y> \rightarrow S$ the inclusion and we get by \cite[Lemma ~5.3.12]{enochs}, a cardinal $\aleph_{\alpha}$ and a submodule $F \subseteq P$ pure such that $<y> \subseteq F$ and $Card (F) \leq \aleph_{\alpha}$. 
 
 Since $f$ is surjective, then there exists a submodule $F_0$ of $M$ which applies in the preceding module. Let $f(F_0) \subseteq M$ and $F_1 \subseteq K$ which applies in $f(F_0)$. We obtain $F_1^{\prime} \subseteq K$ pure and $\aleph_{\beta}$ such that $F_1 \subseteq F_1^{\prime}$ and $Card (F_1^{\prime}) \leq \aleph_{\beta}.$ Then we can get the following exact sequence
 \begin{center}
$\mathbb{F}_\bullet \colon 0 \rightarrow F_1 \rightarrow F_0 \rightarrow F \rightarrow 0.$
 \end{center}
Since $F_1$ and $F_0$ are pure submodules of $\mathcal{X}^\bot$-projective modules, then by Proposition \ref{1.1} $F_1$ and $F_0$ are $\mathcal{X}^\bot$-projective.

Finally, $M/F \in\,^\bot(\mathcal{X}^\bot)$ since the quotient of exact sequences $\mathbb{M}_\bullet / \mathbb{F}_\bullet$ is exact and it remains exact when $Hom_R(-, G)$ is applied for any $\mathcal{X}^\bot$-projective $R$-module $G$ because $\mathbb{M}_\bullet$
 and $\mathbb{S}_\bullet$ verify the two conditions.
\end{proof}

\begin{theorem}\label{1.3}
Let $R$ be a $\mathcal{X}^\bot$-hereditary ring. If the class of all $\mathcal{X}$-projective $R$-modules is closed under direct limits, then every $R$-module $M$ has a $\mathcal{X}^\bot$-projective cover and an $\mathcal{X}$-injective envelope.
\end{theorem}

\begin{proof}
By Proposition \ref{1.2}, $^\bot(\mathcal{X}^\bot)$ is a Kaplansky class. Since all projective modules are $\mathcal{X}^\bot$-projective, $^\bot(\mathcal{X}^\bot)$ contains the projective modules. Clearly, $^\bot(\mathcal{X}^\bot)$ is closed under extensions. By hypothesis, $^\bot(\mathcal{X}^\bot)$ is closed under direct limits. Then by \cite[Theorem ~2.9]{lop}, $(^\bot(\mathcal{X}^\bot), \mathcal{X}^\bot)$ is a perfect cotorsion theory. Hence by Definition \ref{0.2}, every module has a $^\bot(\mathcal{X}^\bot)$-cover and a $\mathcal{X}^\bot$-envelope.
\end{proof}

Now we introduce self $\mathcal{X}$-injective ring.

\begin{definition}\label{self}
 A ring $R$ is said to be self $\mathcal{X}$-injective if $R$ over itself is an $\mathcal{X}$-injective module. 
\end{definition}

We now give some characterizations of $\mathcal{X}^\bot$-projective module:
 
\begin{proposition}\label{4.8}
Let $R$ be a self $\mathcal{X}$-injective ring and let $M$ be an $R$-module. Then the following conditions are equivalent:
\begin{enumerate}
\item $M$ is $\mathcal{X}^\bot$-projective;
\item $M$ is projective with respect to every exact sequence $0 \rightarrow A \rightarrow B \rightarrow C \rightarrow 0$, with $A$ an $\mathcal{X}$-injective module;
\item For every exact sequence $0 \rightarrow K \rightarrow P \rightarrow M \rightarrow 0$, where $P$ is $\mathcal{X}$-injective, $K \rightarrow P$ is an $\mathcal{X}$-injective preenvelope of $K;$
\item $M$ is cokernel of an $\mathcal{X}$-injective preenvelope $K \rightarrow P$ with $P$ a projective module.
\end{enumerate} 
\end{proposition}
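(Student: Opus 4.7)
The plan is to establish the circular chain $(1) \Rightarrow (2) \Rightarrow (3) \Rightarrow (4) \Rightarrow (1)$, combining standard long-exact-sequence arguments from $\mathrm{Ext}$ with a pushout construction; the hypothesis that $R$ is self $\mathcal{X}$-injective will enter essentially only once, in $(3) \Rightarrow (4)$. For $(1) \Rightarrow (2)$, I would apply the functor $\mathrm{Hom}_R(M, -)$ to the given sequence $0 \to A \to B \to C \to 0$ with $A \in \mathcal{U}_\mathcal{X}$; the resulting long exact sequence ends with $\mathrm{Hom}_R(M, B) \to \mathrm{Hom}_R(M, C) \to \mathrm{Ext}^1_R(M, A) = 0$, where the vanishing is by (1), so the required projectivity property holds.

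For $(2) \Rightarrow (3)$, given $0 \to K \stackrel{i}{\to} P \stackrel{\pi}{\to} M \to 0$ with $P \in \mathcal{U}_\mathcal{X}$ and any $\varphi \colon K \to U$ with $U \in \mathcal{U}_\mathcal{X}$, I would form the pushout of $i$ along $\varphi$ to obtain a short exact sequence $0 \to U \stackrel{j}{\to} D \stackrel{\bar{\pi}}{\to} M \to 0$ together with a canonical map $\iota_P \colon P \to D$ satisfying $\bar{\pi} \iota_P = \pi$ and $\iota_P \circ i = j \circ \varphi$. Since $U$ is $\mathcal{X}$-injective, (2) lets the identity on $M$ factor as $M \stackrel{s}{\to} D \to M$; then $\iota_P - s\pi$ sends $P$ into $\ker \bar{\pi} = U$ and restricts to $\varphi$ on $K$, producing the required extension and showing $K \to P$ is an $\mathcal{X}$-injective preenvelope.

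For $(3) \Rightarrow (4)$, I would take any free presentation $0 \to K \to P \to M \to 0$. Because $R$ is self $\mathcal{X}$-injective, $R \in \mathcal{U}_\mathcal{X}$; invoking the compatibility of $\mathcal{U}_\mathcal{X}$ with direct sums (valid, for instance, when $\mathcal{X}$ consists of finitely presented modules so that $\mathrm{Ext}^1_R(X, -)$ commutes with coproducts), every free $R$-module, and hence every projective $R$-module, lies in $\mathcal{U}_\mathcal{X}$. Thus $P$ is simultaneously projective and $\mathcal{X}$-injective, so (3) applies and $K \to P$ is an $\mathcal{X}$-injective preenvelope with cokernel $M$. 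Finally, for $(4) \Rightarrow (1)$, writing $M = P/K$ with $P$ projective and $K \to P$ an $\mathcal{X}$-injective preenvelope, applying $\mathrm{Hom}_R(-, U)$ for any $U \in \mathcal{U}_\mathcal{X}$ yields $\mathrm{Hom}_R(P, U) \to \mathrm{Hom}_R(K, U) \to \mathrm{Ext}^1_R(M, U) \to \mathrm{Ext}^1_R(P, U) = 0$; the preenvelope property makes the first map surjective, forcing $\mathrm{Ext}^1_R(M, U) = 0$.

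The principal obstacle is the step $(3) \Rightarrow (4)$: one must exhibit a short exact sequence $0 \to K \to P \to M \to 0$ in which the middle term $P$ is at once \emph{projective} and \emph{$\mathcal{X}$-injective}, for only then does (3) apply to identify $K \to P$ as an $\mathcal{X}$-injective preenvelope. This is exactly where the self $\mathcal{X}$-injectivity of $R$ is used, together with the passage from $R \in \mathcal{U}_\mathcal{X}$ to freely generated modules being in $\mathcal{U}_\mathcal{X}$; all other implications are formal consequences of the long exact $\mathrm{Ext}$ sequence and the pushout trick.
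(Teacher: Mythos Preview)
Your argument is correct and close to the paper's, with one structural difference: the paper does not run the full cycle $(1)\Rightarrow(2)\Rightarrow(3)\Rightarrow(4)\Rightarrow(1)$ but instead proves $(1)\Leftrightarrow(2)$ directly (using an injective envelope of $N$ for $(2)\Rightarrow(1)$) and then the chain $(1)\Rightarrow(3)\Rightarrow(4)\Rightarrow(1)$. In particular, the paper's $(1)\Rightarrow(3)$ is a one-line long-exact-sequence argument, so your pushout construction for $(2)\Rightarrow(3)$, while perfectly valid, is more work than necessary. Your $(3)\Rightarrow(4)$ and $(4)\Rightarrow(1)$ match the paper's, and you are right that self $\mathcal{X}$-injectivity is used only in $(3)\Rightarrow(4)$; the paper's invocation of it again in $(4)\Rightarrow(1)$ is superfluous, since surjectivity of $\mathrm{Hom}_R(P,N)\to\mathrm{Hom}_R(K,N)$ is exactly the preenvelope condition. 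Finally, your caution about passing from $R\in\mathcal{U}_\mathcal{X}$ to arbitrary free (hence projective) modules lying in $\mathcal{U}_\mathcal{X}$ is well placed: the paper simply asserts ``every projective module is $\mathcal{X}$-injective'' without comment, implicitly assuming that $\mathcal{U}_\mathcal{X}$ is closed under direct sums, which in general needs a hypothesis on $\mathcal{X}$ of the sort you mention.
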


\begin{proof}
$(1) \Rightarrow(2)$. Let $0 \rightarrow A \rightarrow B \rightarrow C \rightarrow 0$ be an exact sequence, where $A$ is $\mathcal{X}$-injective. Then by hypothesis $Hom_{R}(M, B) \rightarrow Hom_{R}(M, C)$ is surjective.
\par $(2) \Rightarrow(1)$. Let $N$ be an $\mathcal{X}$-injective $R$-module. Then there is a sequence $0 \rightarrow N \rightarrow E \rightarrow L \rightarrow 0$ with $E$ an injective envelope of $N$. By (2), $Hom_{R}(M, E) \rightarrow Hom_{R}(M, L)$ is surjective. Thus $Ext_{R}^{1}(M, N) = 0$, as desired.
\par $(1) \Rightarrow(3)$. Clearly, $Ext_{R}^{1}(M, F') = 0$ for all $\mathcal{X}$-injective $F'$. Hence we have an exact sequence $Hom_{R}(F, F') \rightarrow Hom_{R}(K, F') \rightarrow 0$.
\par $(3) \Rightarrow(4)$. Consider an exact sequence $0 \rightarrow K \rightarrow P \rightarrow M \rightarrow 0$ where $P$ projective. Since $R$ is self $\mathcal{X}$-injective, every projective module is $\mathcal{X}$-injective. Hence $P$ is $\mathcal{X}$-injective. Then by hypothesis $K \rightarrow P$ is an $\mathcal{X}$-injective preenvelope.
\par $(4) \Rightarrow(1)$. By hypothesis, there is an exact sequence $0 \rightarrow K \rightarrow P \rightarrow M \rightarrow 0$, where $K \rightarrow P$ is an $\mathcal{X}$-injective preenvelope with $P$ projective. It gives rise to the exactness of $Hom_{R}(P, N) \rightarrow Hom_{R}(K, N) \rightarrow Ext_{R}^{1}(M, N) \rightarrow 0$ for each $\mathcal{X}$-injective $R$-module $N$. Since $R$ is self $\mathcal{X}$-injective, $Hom_{R}(P, N) \rightarrow Hom_{R}(K, N)$ is surjective. Hence $Ext_{R}^{1}(M, N) = 0$, as desired.
\end{proof}

\begin{proposition}\label{4.11}
Let $R$ be a self $\mathcal{X}$-injective. If the class of all $\mathcal{X}$-projective $R$-modules is closed under direct limits, then the following are equivalent for an $R$-module $M$:
\begin{enumerate}
 \item [(1)] $M$ is coreduced $\mathcal{X}^\bot$-projective;
 \item [(2)] $M$ is a cokernel of an $\mathcal{X}$-injective envelope $K \rightarrow P$ with $P$ a projective module.
\end{enumerate}
\end{proposition}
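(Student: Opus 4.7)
The plan is to prove both directions by combining Proposition~\ref{4.8} (which characterizes $\mathcal{U}_\mathcal{X}$-projective modules as cokernels of $\mathcal{X}$-injective preenvelopes with projective codomain) with the minimality property of envelopes. The existence of an actual $\mathcal{X}$-injective envelope of any module will be supplied by Theorem~\ref{1.3}, since $R$ is $\mathcal{U}_\mathcal{X}$-hereditary; and I will repeatedly use the fact, already invoked in the proof of Proposition~\ref{4.8}, that over a self $\mathcal{X}$-injective ring every projective module is $\mathcal{X}$-injective.

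For $(2)\Rightarrow(1)$, let $\iota\colon K\to P$ be the given $\mathcal{X}$-injective envelope with $P$ projective and $M=\coker\iota$. Since every envelope is a preenvelope, Proposition~\ref{4.8}(4) immediately gives that $M$ is $\mathcal{U}_\mathcal{X}$-projective. To show $M$ is coreduced, I would argue by contradiction: if $M$ admits an epimorphism onto a nonzero projective $Q$, then composing with $P\twoheadrightarrow M$ yields an epimorphism $P\twoheadrightarrow Q$, which splits because $Q$ is projective. This produces a decomposition $P=P_{1}\oplus P_{2}$ with $P_{2}\cong Q$ and $\iota(K)\subseteq P_{1}$. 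The projection $g\colon P\to P_{1}\hookrightarrow P$ then satisfies $g\iota=\iota$, so by minimality of the envelope, $g$ must be an automorphism; this forces $P_{2}=0$, contradicting $Q\ne 0$.

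For $(1)\Rightarrow(2)$, I would start with any short exact sequence $0\to K\xrightarrow{\iota}P\to M\to 0$ with $P$ projective. Since $R$ is self $\mathcal{X}$-injective, $P$ is $\mathcal{X}$-injective, so Proposition~\ref{4.8} gives that $\iota$ is an $\mathcal{X}$-injective preenvelope of $K$. By Theorem~\ref{1.3} I can pick an actual $\mathcal{X}$-injective envelope $\phi\colon K\to E$. The preenvelope property of $\iota$ produces $f\colon P\to E$ with $f\iota=\phi$, and the preenvelope property of $\phi$ applied to the $\mathcal{X}$-injective target $P$ produces $g\colon E\to P$ with $g\phi=\iota$. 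Then the composite $fg\colon E\to E$ satisfies $fg\circ\phi=\phi$, so by left-minimality $u:=fg$ is an automorphism of $E$. Hence $gu^{-1}$ is a section of $f$, giving $P=\Ker f\oplus g(E)$ with $\iota(K)\subseteq g(E)$. Taking cokernels yields $M\cong \Ker f\oplus\bigl(g(E)/\iota(K)\bigr)$. As a direct summand of the projective module $P$, $\Ker f$ is projective, and as a direct summand of $M$ it is a quotient of $M$; coreducedness of $M$ therefore forces $\Ker f=0$. Thus $f$ is an isomorphism and $\iota$ identifies with the $\mathcal{X}$-injective envelope $\phi$, as required.

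The only substantial step is the preenvelope-to-envelope upgrade in $(1)\Rightarrow(2)$: the standard comparison between the canonical preenvelope $\iota$ and a true envelope $\phi$ must yield a splitting of $P$ whose complementary summand is necessarily both projective and a quotient of $M$, and coreducedness is precisely what rules this summand out. Everything else reduces to Proposition~\ref{4.8} together with the elementary observation that epimorphisms onto projective modules split.
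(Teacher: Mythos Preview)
Your proof is correct and follows essentially the same route as the paper: for $(1)\Rightarrow(2)$ both arguments compare the canonical preenvelope $K\to P$ with a genuine $\mathcal{X}$-injective envelope (supplied by Theorem~\ref{1.3}), obtain a direct-sum decomposition of $P$, and then kill the complementary summand using coreducedness of $M$. The only notable difference is in $(2)\Rightarrow(1)$: the paper simply cites \cite[Lemma~3.7]{Ding} for the coreducedness of the cokernel of an envelope, whereas you supply the standard splitting argument directly; this is a cosmetic rather than a substantive divergence.
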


\begin{proof}
$(1) \Rightarrow (2)$. Consider an exact sequence $0 \rightarrow K \stackrel{f}{\rightarrow} P \stackrel{g}{\rightarrow} M \rightarrow 0$ with $P$ a projective module. Since $R$ is self $\mathcal{X}$-injective, $P$ is $\mathcal{X}$-injective. By Proposition \ref{4.8}, the natural map $f \colon K \rightarrow P$ is an $\mathcal{X}$-injective preenvelope of $K$. By Proposition \ref{1.3}, $K$ has an $\mathcal{X}$-injective envelope $\alpha \colon K \rightarrow P^{'}$. Then there exist $\beta \colon P^{'} \rightarrow P$ and $\beta^{'} \colon P \rightarrow P^{'}$ such that $\alpha = \beta^{'}f$ and $f = \beta \alpha$. Hence $\alpha = (\beta \beta^{'})\alpha$. It follows that $\beta \beta^{'}$ is an isomorphism, $P = \ima (\beta) \oplus \ker (\beta^{'})$. Note that $\ima (f) \subseteq \ima (\beta)$, and so $P/\ima (f) \rightarrow P/\ima (\beta) \rightarrow 0$ is exact. But $M$ is coreduced and $P/\ima(f) \cong M$, and hence $P/\ima(\beta) = 0$, that is, $P = \ima(\beta).$ So $\beta$ is an isomorphism, and hence $f \colon K \rightarrow P$ is an $\mathcal{X}$-injective envelope of $K$.

\par $(2) \Rightarrow (1)$. By Proposition \ref{4.8}, $M$ is $\mathcal{X}^\bot$-projective and $M$ is coreduced by \cite[Lemma ~3.7]{Ding}
\end{proof}

We are now to prove the main result of this section.
\begin{theorem}
Let $R$ be a self $\mathcal{X}$-injective ring and the class of all $\mathcal{X}$-projective $R$-modules is closed under direct limits. Then an $R$-module $M$ is $\mathcal{X}^\bot$-projective if and only if $M$ is a direct sum of a projective $R$-module and a coreduced $\mathcal{X}^\bot$-projective $R$-module.
\end{theorem}

\begin{proof}
``If'' part is clear.

``Only if'' part. Let $M$ be a $\mathcal{X}^\bot$-projective $R$-module. By Proposition \ref{4.8}, we have an exact sequence $0 \rightarrow K \rightarrow P \rightarrow M \rightarrow 0$ with $P$ a projective module, where $K \rightarrow P$ is an $\mathcal{X}$-injective preenvelope of $K$ . By Proposition \ref{1.3}, $K$ has an $\mathcal{X}$-injective envelope $f \in Hom_R(K, P^{'})$ with $P^\prime$ an $\mathcal{X}$-injective $R$-module. Then we have the following commutative diagram with exact rows:
\begin{center}
\[\xymatrix@C-.15pc@R-.18pc{
0 \ar[r]  & K \ar[r] \ar@{=}[d]  & P^{'} \ar[r] \ar[d]^\alpha & P^{'}/\ima f  \ar[d]^\phi \ar[r]&0& \\ 
0 \ar[r]  & K \ar[r] \ar@{=}[d]  & P \ar[d]^\beta \ar[r] & M \ar[d]^\sigma\ar[r] &0&\\
0 \ar[r]  & K \ar[r]^f  & P^{'} \ar[r] & P^{'}/\ima f \ar[r] &0&\\
}\]
\end{center}
Note that $\beta\alpha$ is an isomorphism, and so $P = \ker \beta \oplus \ima \alpha$. Since $\ima \alpha \cong P^{'}$, $P^{'}$ and $\ker \beta$ are projective. Therefore $P^{'}/\ima f$ is a coreduced $\mathcal{X}^\bot$-projective module by Proposition \ref{4.11}. By the Five Lemma, $\sigma\phi$ is an isomorphism. Hence, we have $M = \ima \phi \oplus \ker \sigma$, where $\ima \phi \cong P^{'}/\ima f$. In addition, we get the following commutative diagram:
\begin{center}
\[\xymatrix@C-.15pc@R-.18pc{
& 0\ar[d]  & 0\ar[d]& 0 \ar[d]& &\\
0\ar[r] & 0 \ar[r] \ar[d] & \ker \beta \ar[r] \ar[d]& \ker \sigma \ar[r] \ar[d]& 0& \\ 
0 \ar[r]  & K \ar@{=}[d] \ar[r] & P \ar[d]^\beta \ar[r] & M \ar[d]^\sigma \ar[r] &0&\\
0 \ar[r]  & K \ar[d] \ar[r]  & L \ar[d] \ar[r] & P^{'}/\ima f \ar[d] \ar[r] &0&\\
& 0& 0& 0.& &
}\]
\end{center}
Hence, $\ker \sigma \cong \ker \beta$.
\end{proof}

\section{Some Relation Between $\mathcal{X}^\bot$-projective and $\mathcal{X}$-injective modules}

 In this section, we deals with $\mathcal{X}$-injective envelope of a module and $\mathcal{X}^\bot$-projective module.
 
%
%
%
%

\begin{theorem}\label{4.5}
Let $\phi$ $\colon M \rightarrow A$ be an $\mathcal{X}$-injective envelope. Then $L = A/\phi(M)$ is $\mathcal{X}^\bot$-projective and hence $A$ is $\mathcal{X}^\bot$-projective whenever $M$ is $\mathcal{X}^\bot$-projective.
\end{theorem}
\begin{proof}
It follows from \cite[Lemma ~2.1.2]{xu}.
\end{proof}

\begin{theorem}\label{4.7}
Let $0 \rightarrow M \rightarrow A \rightarrow D \rightarrow 0$ be a minimal generator of all $\mathcal{X}^\bot$-projective extensions of $M$. Then $A$ is an $\mathcal{X}$-injective envelope of $M$.
\end{theorem}

\begin{proof}
It follows from \cite[Theorem ~2.2.1]{xu}.
\end{proof}
 
 Let $M$ be a submodule of a module $A$. Then $A$ is called a $\mathcal{X}^\bot$-projective extension of a submodule $M$ if $A/M$ is $\mathcal{X}^\bot$-projective.
 
 Recall that among all $\mathcal{X}^\bot$-projective extensions of $M$ we call one of them $0 \rightarrow M \rightarrow A \rightarrow D \rightarrow 0$ a generator for $\mathcal{E}$$\textit{xt}$$\left(^\bot(\mathcal{X}^\bot), M\right)$ (or a generator for all $\mathcal{X}^\bot$-projective extensions of $M$) if for any $\mathcal{X}^\bot$-projective extension 
$0 \rightarrow M \rightarrow A' \rightarrow D' \rightarrow 0$ of $M$, then there is a commutative diagram
\begin{center}
\[\xymatrix@C-.15pc@R-.18pc{
0\ar[r] & M \ar[r] \ar@{=}[d] & A' \ar[r] \ar[d]& D' \ar[r] \ar[d]& 0& \\ 
0 \ar[r]  & M \ar[r] & A \ar[r] & D\ar[r] &0.&\\
}\]
\end{center}
Furthermore, a generator $0 \rightarrow M \rightarrow A \rightarrow D \rightarrow 0$ is called minimal if for all the vertical maps are isomorphisms whenever $A', D'$ are replaced by $A$, $D$, respectively.
  
\begin{theorem}\label{4.6}
Let $R$ be a $\mathcal{X}^\bot$-hereditary ring and the class of all $\mathcal{X}$-projective $R$-modules is closed under direct limits. Then for an $R$-module $M$, there must be a minimal generator whenever $\mathcal{E}\textit{xt}$$\left(^\bot(\mathcal{X}^\bot), M\right)$ has a generator.
\end{theorem} 

\begin{proof}
It follows from \cite[Theorem ~2.2.2]{xu}.
\end{proof}

\begin{theorem}\label{4.9}
Suppose that an $R$-module $M$ has an $\mathcal{X}$-injective envelope. Let $M$ be a submodule of an $\mathcal{X}$-injective $R$-module $L$. Then the following are equivalent:
\begin{enumerate}
 \item [(1)] $i \colon M \rightarrow L$ is a special $\mathcal{X}$-injective envelope;
 \item [(2)] $L/M$ is $\mathcal{X}^\bot$-projective, and there are no direct summands $L_{1}$ of $L$ with $L_{1} \neq L$ and $M \subseteq L_{1}$;
 \item [(3)] $L/M$ is $\mathcal{X}^\bot$-projective, and for any epimorphism $\alpha \colon L/M \rightarrow N$ such that $\alpha\pi$ is split, $N = 0$, where $\pi \colon L \rightarrow L/M$ is the canonical map;
 \item [(4)] $L/M$ is $\mathcal{X}^\bot$-projective, and any endomorphism $\gamma$ of $L$ such that $\gamma i = i$ is a monomorphism;
 \item [(5)] $L/M$ is $\mathcal{X}^\bot$-projective, and there is no nonzero submodule $N$ of $L$ such that $M \cap N = 0$ and $L=(M \oplus N)$ is $\mathcal{X}^\bot$-projective.
\end{enumerate}
\end{theorem}

\begin{proof}
$(1)\Leftrightarrow(2)$ follows from \cite[Corollary ~1.2.3]{xu} and Theorem \ref{4.5}.
\par $(2) \Rightarrow (3)$. Since $\alpha\pi$ is split, there is a monomorphism $\beta : N \rightarrow L$ such that $L = \ker (\alpha\pi) \oplus \beta(N)$. Note that $M \subseteq \ker(\alpha\pi)$, and so $L = \ker (\alpha\pi)$ by (2). Thus $\beta(N) = 0$, and hence $N = 0$.
\par $(3) \Rightarrow (2)$. If $L = L_{1} \oplus N$ with $M \subseteq L_{1}$. Let $p \colon L \rightarrow N$ be a
canonical projection. Then there is an epimorphism $\alpha \colon L/M \rightarrow N$ such that $\alpha\pi = p$. Thus $N = 0$ by hypothsis, and hence $L = L_{1}$, as required.
\par $(1) \Rightarrow (4)$. By Wakamatsu's Lemma \cite[Proposition ~7.2.4]{enochs}, $L/M$ is $\mathcal{X}^\bot$-projective. Since $\gamma i = i$ and $i$ is monomorphism, $\gamma$ is monomorphism.
\par $(4) \Rightarrow (1)$. Since $L/M$ is $\mathcal{X}^\bot$-projective, $i$ is a special $\mathcal{X}$-injective preenvelope. Let $\psi \colon M \rightarrow \mathcal{X}^\bot(M)$ be an $\mathcal{X}$-injective envelope of $M$. Then there exist $\mu \colon L \rightarrow \mathcal{X}^\bot(M)$ and $\nu \colon \mathcal{X}^\bot(M) \rightarrow L$ such that $\mu i = \psi$ and $\nu\psi = i$. Hence $\mu\nu\psi = \psi$ and $i = \nu\mu i$. Thus $\mu\nu$ is an isomorphism, and so $\mu$ is epic. In addition, by (4), $\nu\mu$ is monic, and hence $\mu$ is monic. Therefore $\mu$ is an isomorphism, and hence $i$ is an $\mathcal{X}$-injective envelope of $M$.
\par $(1) \Rightarrow (5).$ It is obvious that $L/M$ is $\mathcal{X}^\bot$-projective. Suppose there is a nonzero submodule $N \subseteq L$ such that $M \cap N = 0$ and $L = (M \oplus N)$ is $\mathcal{X}^\bot$-projective. Let $\pi \colon L \rightarrow L/N$ be a canonical map. Since $L/(N \oplus M)$ is $\mathcal{X}^\bot$-projective and $L$ is $\mathcal{X}$-injective, there is a $\beta \colon L/N \rightarrow L$ such that the following diagram with row exact
\begin{center}
\[\xymatrix@C-.15pc@R-.18pc{
& & L \ar[d]^\pi&&\\
0 \ar[r]  & M \ar[ur]^i \ar[d]^i \ar[r]^\alpha & L/N \ar[r] \ar[dl]^\beta & L/(N \oplus M) \ar[r]& 0&\\
&L&&&
}\]
\end{center}
is commutative. Hence $\beta\pi i = i$. Note that $i$ is an envelope, and so $\beta\pi$ is an isomorphism, whence $\pi$ is an isomorphism. But this is impossible since $\pi(N) = 0$.
\par $(5) \Rightarrow (1)$. Let $\psi_{M} \colon M \rightarrow \mathcal{X}^\bot(M)$ be an $\mathcal{X}$-injective envelope of $M$.
Since $L/M$ is $\mathcal{X}^\bot$-projective, $i$ is a special $\mathcal{X}$-injective preenvelope. Thus we have the following commutative diagram with an exact row.
\begin{center}
\[\xymatrix@C-.15pc@R-.18pc{
0 \ar[r]  & M \ar[dr]^i \ar[r]^{\psi_{M}} & \mathcal{X}^\bot(M) \ar[r]^\phi \ar@<1ex>[d]^f & Q \ar[r]& 0&\\
& & L \ar@<1ex>[u]^g \ar[ur]^\alpha&  & &
}\]
\end{center}
i.e., $f\psi_{M} = i,$ $gi = \psi_{M}$. So $gf\psi_{M} = \psi_{M}$. Note that $\psi_{M}$ is an $\mathcal{X}$-injective
envelope, and hence $gf$ is an isomorphism. Without loss of generality, we may assume $gf = 1$. Write $\alpha = \phi g \colon L \rightarrow Q$. It is clear that $\alpha$ is epic and $M \cap \ker(g) = 0$. We show that $M \oplus \ker(g) = \ker(\alpha)$. Clearly, $M \oplus \ker(g) \subseteq \ker(\alpha)$. Let $x \in \ker(\alpha)$. Then $\alpha(x) = \phi g(x) = 0$. It follows that $g(x) = \psi_{M}(m)$ for some $m \in M$, and hence $fg(x) = f\psi_{M}(m) = m, g(x) = gfg(x) = g(m)$. Thus $x \in M \oplus \ker(g)$, and so $\ker(\alpha) \subseteq M \oplus \ker(g)$, as desired. Consequently, $L =(M \oplus \ker(g)) = L/\ker(\alpha) \cong Q$ is $\mathcal{X}^\bot$-projective by Wakamatsu's Lemma. Thus $\ker(g) = 0$ by hypothesis, and hence $g$ is an isomorphism. So $i \colon M \rightarrow L$ is an $\mathcal{X}$-injective envelope.
\end{proof}
A submodule of $\mathcal{X}$-injective module need not be $\mathcal{X}$-injective.
\begin{example}
Let $(R, \mathfrak{m})$ be a commutative Noetherian and local domain. Assume that $depth R \leq 1.$ Then the submodule $k$ of injective envelope $E(k)$ of $k$ is not $\mathcal{X}$-injective.
\end{example}

\begin{proof}
Consider an exact sequence $0 \rightarrow k \rightarrow E(k) \stackrel{\phi}{\rightarrow} E(k)/k \rightarrow 0$. Hence by \cite[Corollary ~5.4.7]{enochs}, $\phi$ is not an injective cover of $E(k)/k$. This implies that $\phi$ is not an $\mathcal{X}^\bot$-injective cover of $E(k)/k$. Then by Proposition \ref{2.5}, $k$ is not $\mathcal{X}$-injective.
\end{proof}

\begin{theorem}\label{4.10}
Let $R$ be a $\mathcal{X}^\bot$-hereditary ring and the class of all $\mathcal{X}$-projective $R$-modules is closed under direct limits. If $M$ be a submodule of an $\mathcal{X}$-injective $R$-module $A$, then the following are equivalent:
\begin{enumerate}
\item $i \colon M \rightarrow A$ is a special $\mathcal{X}$-injective envelope of $M;$
\item $A$ is a $\mathcal{X}^\bot$-projective essential extension of $M.$
\end{enumerate}
\end{theorem}

\begin{proof}
$(1) \Rightarrow (2).$ It follows by Proposition \ref{4.9}.
\par $(2) \Rightarrow (1).$ By hypothesis, we have an exact sequence: $0 \rightarrow M \rightarrow A \rightarrow L \rightarrow 0$ with $A$ an $\mathcal{X}$-injective module and $D$ an $\mathcal{X}^\bot$-projective module. This sequence is a generator of all $\mathcal{X}^\bot$-projective extensions of $M$. By Theorem \ref{4.6} and \ref{4.7}, we have an $\mathcal{X}^\bot$-projective extension sequence of $M$  $0 \rightarrow M \rightarrow A' \rightarrow L' \rightarrow 0$ which gives an $\mathcal{X}$-injective envelope of $M$. Then we have the following commutative diagram:
\begin{center}
\[\xymatrix@C-.15pc@R-.18pc{
0\ar[r] & M \ar[r]^{\alpha^{'}} \ar@{=}[d] & A' \ar[r]^{\beta^{'}} \ar@<1ex>[d]^f& L' \ar[r] \ar[d]&0 & \\ 
0 \ar[r]  & M \ar[r]^\alpha & A \ar[r]^\beta \ar@<1ex>[u]^g & L \ar[r] &0.&\\
}\]
\end{center}
It is easy to see that $A = f(A') \oplus \ker(g)$. We claim that $\ker(g) = 0$. Since $M = f\alpha^{'}(M) \subseteq f(A')$, $\ker(g) \cap M = 0$. We define the following homomorphism $\psi \colon A / (M \oplus \ker(g)) \rightarrow L'$, $a + (M \oplus \ker(g)) \mapsto \alpha^{'}g(a)$. Obviously, $\psi$ is well defined. By diagram chasing, we see that $\psi$ is injective. But both $g$ and $\beta^{'}$ are surjective,  so is $\psi$. Therefore, $\ker(g)$ is $\mathcal{X}^\bot$-projective essential extension of $A/\ker(g)$. This contradicts the hypothesis that $A$ is $\mathcal{X}^\bot$-projective essential extension of $M$. This implies that $\ker(g) = 0$ and so $f$ is an isomorphism.
\end{proof}

\section{$\mathcal{W}$-injective cover}

In this section, we assume $\mathcal{W}$ is the class of all pure projective modules and we prove that all modules have $\mathcal{W}$-injective covers. 

\begin{proposition}\label{6.3.1}
The class $\mathcal{W}^\bot$ of all $\mathcal{W}$-injective modules is closed under pure submodules.
\end{proposition}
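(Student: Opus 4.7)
The plan is to use the defining property of pure projectivity directly on the pure-exact sequence coming from the pure submodule.

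First I would set up notation: let $A$ be a pure submodule of an $\mathcal{X}$-injective module $B$, so we have a pure exact sequence
\begin{equation*}
0 \longrightarrow A \longrightarrow B \longrightarrow B/A \longrightarrow 0.
\end{equation*}
I want to show $\operatorname{Ext}^1_R(X,A)=0$ for every pure projective left $R$-module $X$, i.e.\ for every $X\in\mathcal{X}$.

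Next I would apply $\operatorname{Hom}_R(X,-)$ to the above sequence and take the resulting long exact sequence
\begin{equation*}
\operatorname{Hom}_R(X,B) \longrightarrow \operatorname{Hom}_R(X,B/A) \longrightarrow \operatorname{Ext}^1_R(X,A) \longrightarrow \operatorname{Ext}^1_R(X,B).
\end{equation*}
The right-hand term vanishes because $B$ is $\mathcal{X}$-injective and $X\in\mathcal{X}$. For the left arrow, since the original sequence is pure exact and $X$ is pure projective (that is, projective with respect to every pure exact sequence, by the definition recalled in Section~2), the map $\operatorname{Hom}_R(X,B)\to\operatorname{Hom}_R(X,B/A)$ is surjective. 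Combining these two facts forces $\operatorname{Ext}^1_R(X,A)=0$, so $A\in\mathcal{U}_\mathcal{X}$, as required.

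There is essentially no obstacle here: the whole argument is a one-line diagram chase, and it uses only (i) the characterization of pure projectives in terms of lifting across pure epimorphisms, and (ii) the defining Ext-vanishing of $\mathcal{X}$-injectives. The only thing to double-check is that pullbacks/pushouts are not needed because the pure projective lifting property is already formulated for arbitrary pure epimorphisms $B\twoheadrightarrow B/A$, not just for syzygy sequences; but this is immediate from the usual definition of pure projectivity used in the paper.
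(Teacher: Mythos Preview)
Your proof is correct and essentially identical to the paper's own argument: both apply $\operatorname{Hom}_R(X,-)$ to the pure exact sequence, use pure projectivity of $X$ to get surjectivity onto $\operatorname{Hom}_R(X,B/A)$, and use $\mathcal{X}$-injectivity of $B$ to kill $\operatorname{Ext}^1_R(X,B)$, forcing $\operatorname{Ext}^1_R(X,A)=0$.
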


\begin{proof}
Let $A$ be a pure submodule of a $\mathcal{W}$-injective module $M$. Then there is a pure exact sequence $0 \rightarrow A \rightarrow M \rightarrow M/A \rightarrow 0$ and a functor $Hom_R(X, -)$ preseves this sequence is exact whenever $X \in \mathcal{W}.$ This implies that the sequence $0 \rightarrow Hom_R(W, A) \rightarrow Hom_R(W, M) \rightarrow Hom_R(W, M/A) \rightarrow Ext_R^1(W, A) \rightarrow 0$ is also exact for all $W \in \mathcal{W}$. It follows that $Ext_R^1(W, A) = 0$ for all $W \in \mathcal{W}$, as desired.
\end{proof}

\begin{theorem}\label{6.3.2}
Every $R$-module has a $\mathcal{W}$-injective preenvelope.
\end{theorem}

\begin{proof}
Let $M$ be an $R$-module. By \cite[Lemma ~5.3.12]{enochs}, there is a cardinal number $\aleph_\alpha$ such that for any $R$-homomorphism $\phi \colon M \rightarrow G$ with $G$ a $\mathcal{W}$-injective $R$-module, there exists a pure submodule $A$ of $G$ such that $|A| \leq \aleph_\alpha$ and $\phi (M) \subset A.$ Clearly, $\mathcal{W}^\bot$ is closed under direct products and by Proposition \ref{6.3.1} $A$ is $\mathcal{W}$-injective. Hence the theorem follows by \cite[Proposition ~6.2.1]{enochs}.
\end{proof}

\begin{proposition}\label{3.3}
The class $\mathcal{W}^\bot$ of all $\mathcal{W}$-injective modules is injectively resolving.
\end{proposition}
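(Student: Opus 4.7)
The plan is to verify the two clauses in the definition of injectively resolving: containment of the injective modules, and closure under cokernels of monomorphisms. The first is immediate, since $Ext_{R}^{1}(X,E)=0$ for any module $X$ whenever $E$ is injective, and in particular for $X\in\mathcal{X}$.

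For the second, fix a short exact sequence $0\rightarrow A\rightarrow B\rightarrow C\rightarrow 0$ with $A,B\in\mathcal{U}_{\mathcal{X}}$, and let $X\in\mathcal{X}$. The long exact $Ext$-sequence yields
\[
Ext_{R}^{1}(X,B)\longrightarrow Ext_{R}^{1}(X,C)\longrightarrow Ext_{R}^{2}(X,A),
\]
and since $B$ is $\mathcal{X}$-injective the left term vanishes. It therefore suffices to show that $Ext_{R}^{2}(X,A)=0$ for every pure projective $X$ and every $\mathcal{X}$-injective $A$.

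Since $X$ is pure projective, $X$ is a direct summand of a sum $\bigoplus_{i}F_{i}$ of finitely presented modules. Because $Ext_{R}^{2}(-,A)$ converts direct sums in the first variable into direct products and respects direct summands, the vanishing reduces to the case of a finitely presented module $F$. Pick a presentation $0\rightarrow K\rightarrow R^{n}\rightarrow F\rightarrow 0$ with $K$ finitely generated; dimension shifting gives $Ext_{R}^{2}(F,A)\cong Ext_{R}^{1}(K,A)$, so the task becomes proving $Ext_{R}^{1}(K,A)=0$.

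The main obstacle lies here: I need $K$ to lie in $\mathcal{X}$, i.e. to be pure projective, in order to pair it against $A\in \mathcal{X}^{\perp}$. This is automatic in the coherent (in particular Noetherian) setting of the surrounding results in this section: then $K$ is itself finitely presented, hence pure projective, and $Ext_{R}^{1}(K,A)=0$ follows directly from the definition of $\mathcal{X}$-injectivity. Tracing back, $Ext_{R}^{2}(X,A)=0$, hence $Ext_{R}^{1}(X,C)=0$, and $C\in\mathcal{U}_{\mathcal{X}}$ as required.
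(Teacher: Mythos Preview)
Your argument is cleaner and more elementary than the paper's, but it proves a strictly weaker statement. The proposition carries no coherence or Noetherian hypothesis on $R$, and neither do the results immediately preceding it in the section (Proposition~\ref{6.3.1} and Theorem~\ref{6.3.2}); only from Theorem~\ref{3.7} onward is $R$ assumed Noetherian. Your reduction via the long exact sequence to $Ext_R^2(X,A)=0$, and then via the structure of pure projectives and dimension shifting to $Ext_R^1(K,A)=0$ with $K$ a finitely generated submodule of $R^n$, is correct. But the final step, concluding that $K$ is finitely presented and hence pure projective, genuinely requires left coherence of $R$; over a non-coherent ring there is no reason for $K$ to belong to $\mathcal{X}$, and the argument halts. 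You acknowledge this honestly, but it remains a gap relative to the proposition as stated.

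The paper takes a different route that avoids any finiteness hypothesis on $R$. It first invokes Theorem~\ref{6.3.2} to obtain $\mathcal{U}_\mathcal{X}$-preenvelopes over an arbitrary ring, passes to special $\widetilde{\mathcal{U}_\mathcal{X}}$-precovers, and then runs a diagram chase through two layers of such precovers to extend a given map $K\to M_3$ along $K\hookrightarrow A$, using only the $\mathcal{X}$-injectivity of $M_1$ and $M_2$. This is considerably heavier than your approach, but it does not depend on the ring. If one is content to work over a coherent ring (which suffices for the Noetherian applications later in the section), your argument is a much simpler replacement.
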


\begin{proof}
Let $0 \rightarrow M_{1} \stackrel{\phi}{\rightarrow} M_{2} \stackrel{\psi}{\rightarrow} M_{3} \rightarrow 0$ be an exact sequence of left $R$-modules with $M_{1}, M_{2} \in  \mathcal{W}^\bot$. Let $G \in \mathcal{W}^\bot$. By Theorem \ref{6.3.2}, every module has a $\mathcal{W}^\bot$-preenvelope. By \cite[Lemma ~1.9]{tri1}, $G$ has a special $\mathcal{W}^\bot$-preenvelope. By \cite[Lemma ~2.2.6]{tri}, $G$ has a special $^\bot(\mathcal{W}^\bot)$-precover. Then there exists an exact sequence  $0 \rightarrow K \rightarrow A \rightarrow G \rightarrow 0$ with $A \in ^\bot(\mathcal{W}^\bot)$ and $K \in \mathcal{W}^\bot$. We prove that $M_{3}$ is $\mathcal{W}$-injection, i.e., to prove that $Ext_{R}^{1}(G, M_{3}) = 0$. For this it suffices to extend any $\alpha \in Hom_{R}(K, M_{3})$ to an element of $Hom_{R}(A, M_{3})$. Clearly, $K$ has $^\bot(\mathcal{W}^\bot)$-precover, 
\begin{center}
$0 \rightarrow K' \stackrel{f}{\rightarrow} A' \stackrel{g}{\rightarrow} K \rightarrow 0$,
\end{center}
where $K$, $K^{'} \in \mathcal{W}^\bot$ and $A' \in ^\bot(\mathcal{W}^\bot)$. As the class $\mathcal{W}^\bot$ is closed under extensions, $A' \in \mathcal{W}^\bot$. Since $\alpha \circ g \colon A' \rightarrow M_{3}$ with $A^{'} \in \mathcal{W}^\bot$ and $M_{1}$ a $\mathcal{W}$-injective module, then there exists $\beta \colon A' \rightarrow M_{2}$ such that $\psi \circ \beta = \alpha \circ g$. That is, the following diagram is commutative
\begin{center}
\[\xymatrix@C-.15pc@R-.18pc{
A' \ar@{-->}[d]^\beta \ar[r]^g & K \ar[d]^\alpha &\\ 
M_{2} \ar[r]^\psi & M_{3}.&\\
}\]
\end{center}
Now, we define $\beta\upharpoonright_{\ima \phi} \colon A' \rightarrow \ima \phi$, where $\upharpoonright$ is a restriction map. Then there exists $\gamma \colon K' \rightarrow M_{1}$ such that $\beta\upharpoonright_{\ima \phi} (f(K'))=\phi \gamma(K')$. Hence we have the following commutative diagram
\begin{center}
\[\xymatrix@C-.15pc@R-.18pc{
0 \ar[r] & K' \ar[r]^f \ar@{-->}[d]^\gamma & A' \ar@{-->}[d]^\beta \ar[r]^g & K  \ar[d]^\alpha \ar[r]& 0&\\ 
0 \ar[r] & M_{1} \ar[r]^\phi & M_{2} \ar[r]^\psi & M_{3} \ar[r]& 0.&\\
}\]
\end{center}
The $\mathcal{W}$-injectivity of $M_{1}$ yields a homomorphism $\gamma_{1} \colon A' \rightarrow M_{1}$ such that $\gamma = \gamma_{1} \circ f$. So for each $k' \in K'$, we get $(\beta \circ f)(k') = (\phi \circ \gamma)(k') = (\phi \circ (\gamma_{1} \circ f))(k')$. Then there exists a map $\beta_{1} \in Hom_{R}(K, M_{2})$ such that $\beta = \beta_{1} \circ g$ and we get $\alpha = \psi \circ \beta_{1}$.
Thus the following diagram is commutative
\begin{center}
\[\xymatrix@C-.15pc@R-.18pc{
0 \ar[r] & K' \ar[r]^f \ar@{-->}[d]_\gamma & A' \ar@{-->}[dl]_{\gamma_1} \ar@{-->}[d]_\beta \ar[r]^g & K  \ar@{-->}[dl]_{\beta_1} \ar[d]_\alpha \ar[r]& 0&\\ 
0 \ar[r] & M_{1} \ar[r]^\phi & M_{2} \ar[r]^\psi & M_{3} \ar[r]& 0. &\\
}\]
\end{center}
Since $M_{2}$ is $\mathcal{W}$-injective, there exists $\rho \in Hom_{R}(A, M_{2})$ such that $\beta_{1} = \rho \circ f$. Thus $\alpha = \psi \circ \beta_{1} = \psi \circ (\rho \circ f)$, where $\psi \circ \rho \in Hom_{R}(A, M_{3})$. Hence $M_{3}$ is $\mathcal{W}$-injective. 
\end{proof}

\begin{proposition}\label{3.5}
Let $M$ be a $\mathcal{W}$-injective $R$-module and $A$ be a pure submodule $M$. Then an $R$-module $M/A$ is $\mathcal{W}$-injective.
\end{proposition}

\begin{proof}
By Proposition \ref{3.3}, $\mathcal{W}^\bot$ is injectively resolving. Let $M \in \mathcal{W}^\bot$ and $A$ be a pure submodule of $M$. By Proposition \ref{6.3.1}, $A$ is $\mathcal{W}$-injective. From the short exact sequence $0 \rightarrow A \rightarrow M \rightarrow M/A \rightarrow 0$, we get $M/A$ is $\mathcal{W}$-injective.
\end{proof}

	We recall that $M$ is pure injective if and only if $M$ is a direct summand of a direct sum of finitely presented $R$-modules \cite{azumaya}. It follows that $M$ is a submodule of a finitely presented module. If $R$ is coherent, then direct sum of $\mathcal{W}$-injective $R$-modules is $\mathcal{W}$-injective.

	The following result establishes an analog version of Theorem 2.6 in \cite{pin}.

\begin{theorem}\label{3.7}
Let $R$ be a coherent ring. Then every $R$-module $M$ has a $\mathcal{W}$-injective precover.
\end{theorem}

\begin{proof}
 Let $W$ be a set with $Card(W) \leq \kappa$, where $\kappa$ is the cardinal in \cite[Theorem ~5]{bashir}. Denote $\mathcal{P}(W)$ the power set of $W$. We find all the binary operations $* \colon G \times G \rightarrow G$ for each element $G \in \mathcal{P}(W)$ and we get a new collection $\cup_{G \in \mathcal{P}(W)}$ $\left\{G, *\right\}= \overline{\mathcal{G}}^{'}$. From $\overline{\mathcal{G}}^{'}$, find all the scalar multiplications, which are functions from the cross product into itself. This remains a set $\cup_{G \in \mathcal{P}(W)}\left\{(G, *, \cdot)\right\}$ which is denoted by $\overline{\mathcal{G}}$. Some collection of members of $\overline{\mathcal{G}}$ form a module and we can get the class $\mathcal{G}$ of $\mathcal{W}$-injective modules which is contained in the class $\mathcal{W}^\bot$. Clearly $\mathcal{I}_0$ is contained in the class $\mathcal{G}$. Since $R$ is coherent, $\bigoplus_{N \in \mathcal{I}_0} N^{\left(Hom_{R}(N, M)\right)}$ is an injective module and hence it is $\mathcal{W}$-injective. We prove that $\bigoplus_{N \in \mathcal{I}_0} N^{\left(Hom_{R}(N, M)\right)} \stackrel{\phi}{\rightarrow} M$ is a $\mathcal{W}$-injective precover. That is, to show that if for any homomorphism $N' \rightarrow M$ with $N' \in \mathcal{W}^\bot$, then that the following diagram
\begin{center}
\[\xymatrix@C-.15pc@R-.18pc{
N'\ar[dr] \ar@{-->}[d] & &\\
\bigoplus\limits_{N \in \mathcal{I}_0} N^{\left(Hom_{R}(N, M)\right)} \ar[r] & M &
}\]
\end{center}
is commutative.

 Let $K$ be the kernel of the map $N' \rightarrow M$, where $N'$ is sufficiently large. Then $N'/K$ is sufficiently small since $\left| N'/K\right| \leq \left|M\right|$. By Bashir's Theorem \cite{bashir}, $K$ has a nonzero submodule $L$ that is pure in $N'$. Therefore $L$ is $\mathcal{W}$-injective by Proposition \ref{6.3.1}. This implies that $N'/L$ is $\mathcal{W}$-injective by Proposition \ref{3.5}. If $N'/L$ is still sufficiently large, then repeat the process from the map $N'/L \rightarrow M$. Since $N'/K_{1}$ is sufficiently small, $K_{1}/L$ has a nonzero submodule $L_{1}/K_{1}$ that is pure in $N'/L$. Thus $(N'/K_{1})/(L_{1}/K_{1}) = N'/L_{1}$ is $\mathcal{W}$-injective. But again, this may be too large. Then by continuing this process we get $\underrightarrow{lim}\, (N'/L_{i})$ is $\mathcal{W}$-injective since $\mathcal{W}^\bot$ is closed under direct limits and it is sufficiently small, namely $\left| \underrightarrow{lim}\,(N'/L_{i})\right| \leq \kappa$.
Then the map $N' \rightarrow M$ can be factored through a $\mathcal{W}$-injective module $\underrightarrow{lim}\,(N'/L_{i})$. Let $f \in Hom_{R}(\underrightarrow{lim}\,(N'/L_{i}), M)$. We define a map $\overline{f} \colon \underrightarrow{lim}\,(N'/L_{i}) \rightarrow \bigoplus N^{\left(Hom_{R}(N, M)\right)}$ such that $\overline{f}(n' + L') = (n_{f}, 0, 0, \cdots)$, with $n_{f} = n$. Clearly, $\overline{f}$ is a linear map. Then the following diagram
\begin{center}
\[\xymatrix@C-.15pc@R-.18pc{
&\underrightarrow{lim} N'/L_{i} \ar@{-->}[dl] \ar[d]^f &\\
\bigoplus\limits_{N \in \mathcal{I}_0} N^{\left(Hom_{R}(N, M)\right)} \ar[r]^\phi & M &
}\]
\end{center}
is commutative. Hence we get the following commutative diagram
\begin{center}
\begin{xy}
(0,0)*+{ };
(35,0)*+{\bigoplus\limits_{N \in \mathcal{I}_0} N^{\left(Hom_{R}(N, M)\right)}}="v1";%
(94,0)*+{M.}="v2";%
(66,10)*+{\underrightarrow{lim} N'/L_{i}}="v3";%
(59,30)*+{N'}="v4";%
{\ar@{->}_{\phi} "v1"; "v2"};%
{\ar@{->}_{f} "v3"; "v2"};%
{\ar@{-->} "v4"; "v1"};%
{\ar@{->} "v4"; "v2"};%
{\ar@{->} "v4"; "v3"};%
\end{xy}
\end{center}
Thus, ${\bigoplus\limits_{N \in \mathcal{I}_0} N^{\left(Hom_{R}(N, M)\right)}} \stackrel{\phi}{\longrightarrow}M$ is a $\mathcal{W}$-injective precover.
\end{proof}

\begin{proposition}\label{3.8}
Let $R$ be a coherent ring. Then the class of all $\mathcal{W}$-injective modules is closed under direct limits.
\end{proposition}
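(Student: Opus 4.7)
The plan is to reduce the statement to the classical fact that $Ext^1_R(F, -)$ commutes with direct limits whenever $F$ admits a projective resolution by finitely generated modules, and to use the Noetherian hypothesis to guarantee this applies to every finitely presented $F$. Fix a directed system $(M_\alpha \mid \alpha \in \Lambda)$ of $\mathcal{X}$-injective $R$-modules with direct limit $M = \underrightarrow{lim}_{\alpha \in \Lambda} M_\alpha$. I need to verify that $Ext_R^1(X, M) = 0$ for every pure projective $R$-module $X$.

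First, I would invoke the structure theorem for pure projective modules: any such $X$ is a direct summand of some $\bigoplus_{i \in I} F_i$ with each $F_i$ finitely presented. Since $Ext_R^1(-, M)$ converts direct sums into direct products and respects direct summands, it suffices to prove $Ext_R^1(F_i, M) = 0$ for each $i \in I$.

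Next, because $R$ is Noetherian, every finitely presented module $F_i$ admits a projective resolution by finitely generated free $R$-modules, so $Ext_R^1(F_i, -)$ commutes with direct limits. Moreover, every finitely presented module is itself pure projective, so $F_i \in \mathcal{X}$; combined with the $\mathcal{X}$-injectivity of each $M_\alpha$, this gives $Ext_R^1(F_i, M_\alpha) = 0$ for all $\alpha$. Therefore
\[
Ext_R^1(F_i, M) \;\cong\; \underrightarrow{lim}_\alpha\, Ext_R^1(F_i, M_\alpha) \;=\; 0.
\]

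The main obstacle, and the only step where the Noetherian hypothesis is genuinely used, is the interchange of $Ext^1$ with the direct limit in the last display; without it one only gets vanishing of $Ext^1$ against each $M_\alpha$ separately, with no way to control the colimit. Everything else is routine manipulation using the pure-projective structure theorem and the behavior of $Ext^1$ on direct sums and summands.
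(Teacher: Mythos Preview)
Your argument is correct and takes a genuinely different route from the paper. You exploit the structure theorem for pure projectives to reduce the test class to finitely presented modules $F_i$, and then use that over a Noetherian ring $Ext^1_R(F_i,-)$ commutes with filtered colimits; this makes the role of the Noetherian hypothesis completely transparent. The paper instead fixes $G\in\mathcal{X}$, chooses a short exact sequence $0\to K\to P\to G\to 0$ with $P$ projective, and tries to lift an arbitrary $\beta\in Hom_R(K,U)$ to $P$ by first asserting that $K$ is $\mathcal{U}_\mathcal{X}$-projective (via Proposition~\ref{2.1.3}, which strictly speaking requires $R$ to be $\mathcal{U}_\mathcal{X}$-hereditary rather than merely Noetherian) and then claiming that $\beta$ factors through some term $U_\alpha$ of the system. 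Your approach sidesteps both of these delicate points: you never need the syzygy $K$ to lie in $\widetilde{\mathcal{U}_\mathcal{X}}$, and you never need a map out of a possibly large module to factor through a single stage of the colimit, because you have already cut everything down to finitely presented test objects. The only extra ingredient you import is the Warfield-type description of pure projectives as summands of sums of finitely presented modules, which the paper does not invoke; this is a standard fact and a fair price for a cleaner argument.
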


\begin{proof}
Every pure projective $R$-module is a finitely presented $R$-module over a coherent ring. Hence the proposition follows by \cite[Lemma ~3.16]{tri1} 
\end{proof}

The following Theorem follows from Theorem \ref{3.7} and Proposition \ref{3.8}.

\begin{theorem}
Let $R$ be a coherent ring. Then every $R$-module has a $\mathcal{W}$-injective cover.
\end{theorem}

\end{document}